\documentclass[12pt,reqno]{amsart}
\usepackage[a4paper,bindingoffset=0.1in,left=1.2in,right=1.3in,top=1.4in,bottom=1.5in,footskip=.25in]{geometry}
\usepackage{indentfirst,amssymb,amsfonts,amsmath,amsthm}    
\usepackage{newtxtext,newtxmath}
\usepackage{setspace}
\usepackage{times}
\usepackage[utf8]{inputenc}
\usepackage{stackengine}
\usepackage[T1]{fontenc}
\usepackage{verbatim}
\usepackage{hyperref}
\hypersetup{colorlinks=true, linkcolor=blue,citecolor=blue, urlcolor=blue}
\urlstyle{same}
\usepackage{mathtools}
\numberwithin{equation}{section}

\newtheorem{thm}{Theorem}[section]
\newtheorem{defin}{Definition}[section]
\newtheorem{prop}{Proposition}[section]
\newtheorem{lem}{Lemma}[section]

\newtheorem{rema}{Remark}[section]
\newtheorem*{observation}{Observation}
\newtheorem{coro}{Corollary}[section]
\newtheorem{notation}{Notation}

\DeclareMathOperator{\rk}{rk}

\DeclareMathOperator{\alt}{Alt}
\DeclareMathOperator{\tr}{Tr}
\DeclareMathOperator{\gal}{Gal}
\DeclareMathOperator{\mo}{mod}
\DeclareMathOperator{\ord}{ord}

\DeclareMathOperator{\sym}{Symm}

\makeatletter
\g@addto@macro{\endabstract}{\@setabstract}
\newcommand{\authorfootnotes}{\renewcommand\thefootnote{\@fnsymbol\c@footnote}}%
\makeatother

\begin{document}
\setcounter{page}{1} 

\baselineskip .65cm 
\pagenumbering{arabic}
\title{{Symmetric bilinear Forms and Galois Theory} }
\author [Sugata Mandal]{ Sugata Mandal}
\address{Sugata Mandal, Department of Mathematics\\
Ramakrishna Mission Vivekananda Educational and Research Institute (Belur Campus) \\
Howrah, WB 711202\\
India}
\email{gmandal1961@gmail.com\thanks{}}
\maketitle
\begin{abstract}
    Let $ K$ be a field admitting a Galois extension $L$ of degree $n$, denoting the Galois group as $G = \gal(L/K)$. Our focus lies on the space $\sym_K(L)$  of symmetric $K$-bilinear forms on $L$. We establish a decomposition of $\sym_K(L)$ into direct sum of $K$-subspaces $A^{\sigma_i}$, where $\sigma_i \in G$. Notably, these subspaces $ A^{\sigma_i}$ exhibit nice constant rank properties. The central contribution of this paper is a decomposition theorem for $\sym_K(L)$, revealing a direct sum of $\frac{(n+1)}{2}$ constant rank $n$-subspaces, each having dimension of $n$. This holds particularly when $G$ is cyclic, represented as $G = \gal(L/K) = \langle\sigma\rangle$. For cyclic extensions of even degree $n = 2m$, we present slightly less precise but analogous results. In this scenario, we enhance and enrich these constant results and show that, the component $ A^{\sigma}$ often decomposes directly into a constant rank subspaces. Remarkably, this decomposition is universally valid when $-1 \notin L^{2}$. Consequently, we derive a decomposition of $\sym_K(L)$ into subspaces of constant rank under several situations. Moreover, leveraging these decompositions, we investigate the maximum dimension of an $n$-subspace inside $M(n,K)$ and $S(n,K)$ for various field $K$ where $M(n,K)$ and $ S(n,K)$ denote the vector spaces $(n \times n)$ matrices and symmetric matrices  over $K$, respectively.

    \noindent \textbf{Keywords.} symmetric form, constant rank space, cyclic extension\\
    
\noindent 
\textbf{2020 Math. Subj. Class.}: 12F05, 12F10, 15A63
\end{abstract}
\section{Introduction}
Let $K$ be a field of characteristic other than two and $\sym_K (V)$ denote the space of all symmetric bilinear forms  on a $K$-space $V$ of dimension $n$. Suppose $K$ admits a Galois extension $L$ of degree $n$. Taking the $n$-dimensional $K$-space $L$ as a model for $V$ it is shown in in this paper that ideas from Galois Theory can be fruitfully applied for studying symmetric bilinear forms on $V$. Notably, this approach sheds light on the subspaces of $\sym_K(V)$ whose nonzero skew-forms all have the same rank equal to $k$, say. Such ``$k$-subspaces" besides being interesting in their own right  play an important role in coding theory (see \cite{PD75},\cite{PD78}). An essential consideration is given to $n$-subspaces of $\sym_K(V)$-subspaces where all nonzero skew forms are non-degenerate. The Galois nature of the extension enhances the vector space structure, enabling constructions that would be unachievable without the additional field-theoretic apparatus. The  devolopment in this paper is modelled on that of \cite{GQ09} and \cite{GM2023} for skew-forms. In particular, we adapt many key results in these papers to our situation. In this paper, replacing $V$ by the $K$-space $L$, we intend to investigate symmetric bilinear forms defined on $ L \times L$ and taking values in $K$. Our particular interest lies in subspaces of such forms and their properties concerning rank. 

The Galois-theoretic trace map $\tr^L_K : L \rightarrow K$ is a central tool, defined by  \[ \tr^L_K(a) = \sum_{\sigma \in \gal(L/K)} \sigma(a), \quad \forall a \in L. \] 
 As is well known, this form is non-identically zero and exhibits the Galois invariance property $\tr(\tau(x)) = \tr(x)$ for all $x \in L$ and all $\tau \in G$. It serves as the primary tool for constructing and investigating subspaces of symmetric bilinear forms in our study.
\section{Symmetric bilinear forms and Galois extensions}
 For each $\sigma \in G : = \gal(L/K)$ and $b \in L$ we may define the symmetric bilinear form 
\begin{equation} \label{defn-sym-frm}
\phi_{b,\sigma}(x,y) = \tr^L_K(b(x\sigma(y)+\sigma(x)y)), \qquad \forall x, y \in L.  
\end{equation}
\begin{lem}
    Let $ \phi = \phi_{b,\sigma}$ be a symmetric bilinear form defined above. Thus we have \[\phi(x,y)= \tr((\sigma^{-1}(bx) + b\sigma(x)y))\] for all $x$ and $y$ in $ L$.
\end{lem}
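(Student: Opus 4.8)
The plan is to start directly from the defining expression \eqref{defn-sym-frm} and to manipulate it using only the two structural properties of the trace map recalled in the introduction: its additivity (a special case of $K$-linearity) and its Galois invariance $\tr(\tau(z)) = \tr(z)$ for every $z \in L$ and every $\tau \in G$. First I would expand by additivity, writing
\[
\phi_{b,\sigma}(x,y) = \tr\bigl(bx\sigma(y)\bigr) + \tr\bigl(b\sigma(x)y\bigr).
\]
The second summand already displays $y$ as a right factor, so it requires no modification; the whole difficulty, such as it is, lies in the first summand, where the variable $y$ enters only through $\sigma(y)$.

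The crucial step is to strip the $\sigma$ off of $y$ in that first summand. Here I would invoke Galois invariance with $\tau = \sigma^{-1}$: since the trace is unchanged under any automorphism in $G$, we have $\tr(z) = \tr(\sigma^{-1}(z))$. Applying this with $z = bx\sigma(y)$, and using that $\sigma^{-1}$ is a ring homomorphism together with the identity $\sigma^{-1}(\sigma(y)) = y$, rewrites the first summand as $\tr\bigl(\sigma^{-1}(bx)\,y\bigr)$, which now also exhibits $y$ as a right factor.

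Finally I would recombine the two summands by additivity, obtaining
\[
\phi_{b,\sigma}(x,y) = \tr\bigl(\sigma^{-1}(bx)\,y\bigr) + \tr\bigl(b\sigma(x)\,y\bigr) = \tr\bigl((\sigma^{-1}(bx)+b\sigma(x))\,y\bigr),
\]
which is the asserted identity (to be read with $y$ factored out of the entire sum). I do not expect any genuine obstacle: the argument is a short computation, and the single point meriting attention is the one application of Galois invariance, which is exactly the device that converts $\sigma(y)$ into $y$ so that the two traces can be fused into a single pairing that is manifestly linear in $y$. This normal form, with $y$ isolated, is presumably what the subsequent development exploits in order to read off the adjoint operator $x \mapsto \sigma^{-1}(bx)+b\sigma(x)$ attached to $\phi_{b,\sigma}$ and thereby to analyze its rank.
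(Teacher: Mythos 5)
Your proof is correct and is exactly the intended argument: the paper states this lemma without any proof at all, and your computation---expand $\tr\bigl(b(x\sigma(y)+\sigma(x)y)\bigr)$ by linearity, then apply Galois invariance of the trace with $\tau = \sigma^{-1}$ to turn $\tr\bigl(bx\sigma(y)\bigr)$ into $\tr\bigl(\sigma^{-1}(bx)\,y\bigr)$---is the standard one-line justification the author evidently had in mind. You are also right to read the statement's displayed formula as $\tr\bigl((\sigma^{-1}(bx)+b\sigma(x))\,y\bigr)$; the parenthesization in the paper is a typo, and your corrected reading is the one the paper itself uses immediately afterwards to characterize the radical of $\phi_{b,\sigma}$.
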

\begin{coro}\label{radicalcondtn}
     An element $ x$ is in the radical  of $\phi_{b,\sigma}$ if and if \begin{equation}\label{radcndtn}
     \sigma^{-1}(bx) = -b \sigma(x).
     \end{equation}
\end{coro}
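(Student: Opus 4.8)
The plan is to reduce the corollary to the defining property of the radical and then to invoke the nondegeneracy of the trace form. By definition, an element $x \in L$ lies in the radical of $\phi_{b,\sigma}$ exactly when $\phi_{b,\sigma}(x,y) = 0$ for every $y \in L$. So the first step is purely formal: I substitute the closed-form expression furnished by the preceding Lemma, rewriting the radical condition as
\[
\tr\big((\sigma^{-1}(bx) + b\sigma(x))\,y\big) = 0 \qquad \text{for all } y \in L.
\]

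Next I would read the left-hand side as the pairing of the single fixed element $c := \sigma^{-1}(bx) + b\sigma(x) \in L$ against an arbitrary $y$ under the symmetric $K$-bilinear trace form $(u,v) \mapsto \tr(uv)$ on $L$. The decisive ingredient is that this trace form is nondegenerate: because $L/K$ is a finite separable extension (indeed Galois), the form $(u,v) \mapsto \tr(uv)$ has trivial radical, so $\tr(cy) = 0$ for every $y \in L$ forces $c = 0$. Applying this to our particular $c$ gives $\sigma^{-1}(bx) + b\sigma(x) = 0$, which is precisely the asserted identity $\sigma^{-1}(bx) = -b\sigma(x)$. Conversely, if $c = 0$ then the displayed trace vanishes identically in $y$, so $x$ automatically belongs to the radical; this settles the easy direction with no further work, and the two implications together yield the stated equivalence.

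I expect the only substantive point to be the nondegeneracy of the trace form, which I would cite as a standard consequence of separability rather than reprove here; everything else is a direct translation of definitions through the Lemma. In effect the corollary is nothing more than the observation that, once the bilinear form is written in the normalized shape $\phi(x,y) = \tr(c\,y)$ with $c$ depending linearly on $x$, membership in the radical is equivalent to the vanishing of the ``dual'' element $c$, thanks to the perfectness of the trace pairing.
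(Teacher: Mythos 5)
Your proof is correct and is exactly the argument the paper intends: the corollary is stated without proof precisely because it follows immediately from the preceding Lemma's expression $\phi_{b,\sigma}(x,y)=\tr\bigl((\sigma^{-1}(bx)+b\sigma(x))y\bigr)$ together with the nondegeneracy of the trace pairing of a Galois (separable) extension, which the paper itself invokes elsewhere. Your identification of the ``dual'' element $c=\sigma^{-1}(bx)+b\sigma(x)$ and the appeal to the perfectness of the trace form is the same reasoning, just written out explicitly.
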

 Recall that if $F$ is an intermediate subfield and $a \in L$ then the $L/F$-norm  
 $N_{L/F}(a)$ of $a $ is defined as $N_{L/F}(a) = \displaystyle\prod_{\theta \in \gal(L/F)} \theta(a)$. For the sake of convenience in what follows we shall denote the fixed field of $\sigma \in \gal(L/K)$  by $L^{\langle \sigma^{i} \rangle}$.
\begin{lem}\label{degeneracy condition}
    Let $ \phi = \phi_{b,\sigma}$ be a symmetric bilinear form defined above, with $b \in L^{\times}$, then $ \phi$ is  degenerate if and only if $-\sigma(b) b^{-1}$ is expressible in the form $\sigma^{2} (c) c^{-1}$ for some  $ c \in L^{\times}$ that is,   $\phi$ is degenerate if and only if 
\begin{equation}\label{form1} 
N_{L/L^{\langle \sigma^2 \rangle}} (-\sigma(b)/b) = 1. 
\end{equation}
Moreover, in this case $\rk (\phi)= n - \frac {n}{[L:L^{\langle \sigma^2\rangle}]}$
\end{lem}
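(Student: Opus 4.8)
The plan is to reduce the degeneracy question to Hilbert's Theorem 90 applied to the cyclic extension $L/L^{\langle\sigma^2\rangle}$. I would start from the radical condition of Corollary \ref{radicalcondtn}: an element $x$ lies in the radical of $\phi = \phi_{b,\sigma}$ precisely when $\sigma^{-1}(bx) = -b\sigma(x)$. Applying $\sigma$ to both sides, which is a reversible step since $\sigma$ is a bijection, turns this into the equivalent equation $bx = -\sigma(b)\sigma^2(x)$. For $x \neq 0$ this rearranges to $\sigma^2(x)/x = -b/\sigma(b)$, or equivalently, writing $c = x^{-1}$, to $\sigma^2(c)/c = -\sigma(b)/b$. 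Hence $\phi$ has a nonzero radical, i.e.\ is degenerate, if and only if $-\sigma(b)/b$ can be written as $\sigma^2(c)/c$ for some $c \in L^{\times}$.

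Next I would invoke the Galois correspondence to pin down the relevant cyclic structure. Since $L/K$ is Galois, so is $L/L^{\langle\sigma^2\rangle}$, with group exactly $\langle\sigma^2\rangle$, generated by $\sigma^2$; this is a cyclic extension. Hilbert's Theorem 90 (multiplicative form) then asserts that an element of $L^{\times}$ has norm $1$ over the fixed field if and only if it is a coboundary of the form $\sigma^2(c)/c$. Applying this to $-\sigma(b)/b$ converts the condition of the previous paragraph into the norm equation $N_{L/L^{\langle\sigma^2\rangle}}(-\sigma(b)/b) = 1$, which is exactly \eqref{form1}. This establishes the degeneracy criterion.

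For the rank formula I would compute $\dim_K$ of the radical $R$ directly. Assuming $\phi$ degenerate, fix a nonzero $x_0 \in R$. Every nonzero $x \in R$ satisfies $\sigma^2(x)/x = -b/\sigma(b) = \sigma^2(x_0)/x_0$, whence $\sigma^2(x/x_0) = x/x_0$, so $x/x_0 \in L^{\langle\sigma^2\rangle}$. Conversely, multiplying $x_0$ by any element of $L^{\langle\sigma^2\rangle}$ keeps it in $R$, since such elements are fixed by $\sigma^2$ and the defining equation $bx = -\sigma(b)\sigma^2(x)$ is linear over this fixed field. Thus $R = L^{\langle\sigma^2\rangle}\cdot x_0$ is a free rank-one $L^{\langle\sigma^2\rangle}$-module, so $\dim_K R = [L^{\langle\sigma^2\rangle}:K] = n/[L:L^{\langle\sigma^2\rangle}]$. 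Since $\rk(\phi) = \dim_K L - \dim_K R = n - \dim_K R$, the stated formula follows.

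I expect the main obstacle to be bookkeeping rather than conceptual: I must ensure the passages between the radical condition, the coboundary form $\sigma^2(c)/c$, and the norm are genuine equivalences and not mere implications, and that the inversions $-b/\sigma(b)$ versus $-\sigma(b)/b$ are tracked correctly when substituting $c = x^{-1}$. The genuinely substantive input is recognizing the coboundary condition as the conclusion of Hilbert 90 for the cyclic extension $L/L^{\langle\sigma^2\rangle}$; once that identification is made, both the degeneracy criterion and the rank count drop out, the latter from the observation that the radical is a single $L^{\langle\sigma^2\rangle}$-line through any nonzero radical vector.
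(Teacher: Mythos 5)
Your proof is correct and takes essentially the same route as the paper's: you pass from the radical condition of Corollary \ref{radicalcondtn} via an application of $\sigma$ to the coboundary equation $-\sigma(b)/b = \sigma^{2}(c)/c$, convert this to the norm condition by Hilbert's Theorem 90 for the cyclic extension $L/L^{\langle\sigma^2\rangle}$, and compute the rank by identifying the radical as a single $L^{\langle\sigma^2\rangle}$-line through a fixed nonzero radical element. Your write-up actually supplies the details (the reversibility of each equivalence and the verification that $R = L^{\langle\sigma^2\rangle}\cdot x_0$) that the paper compresses into ``it can be checked.''
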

\begin{proof}
    Let $ R$ denote the radical of $\phi$ and let $x$ be an element of $R$. By Corollary \ref{radicalcondtn} and applying $\sigma$  on \eqref{radcndtn}, we obtain $$
 bx = - \sigma(b) \sigma^2(x).
 $$
 Consequently if $x\neq 0$ then taking $ x = c^{-1}$, we have  $$ -\sigma(b) b^{-1} = \sigma^{2}(c) c^{-1}.$$
 The next assertion is now clear in view of the Hilbert Theorem 90.\newline
 Now will compute the rank of $\phi$ when $\phi$ is degenerate. At first we claim that $\dim (R)= \frac {n}{[L:L^{\langle \sigma^2\rangle}]}$. Indeed let $x \in R$ then it  can be checked that $x \in c^{-1}L^{\langle \sigma^{2}\rangle}$, also conversely if $y \in c^{-1}L^{\langle \sigma^{2}\rangle} $  then $y \in R$. Thus $R = c^{-1}L^{\langle \sigma^{2}\rangle}$. Thus $\dim R = \frac {n}{[L:L^{\langle \sigma^2\rangle}]}$, that is,  $\rk(\phi)= n- \frac {n}{[L:L^{\langle \sigma^2\rangle}]}$.
\end{proof}
\begin{lem}\label{rank for odd}
    Suppose that the order of $\sigma$ is odd then for each non-zero element $ b \in L$, the symmetric bilinear form $\phi_{b,\sigma}$ is non degenerate.
\end{lem}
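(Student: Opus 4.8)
The plan is to reduce everything to the degeneracy criterion already proved in Lemma \ref{degeneracy condition}, which states that for $b \in L^{\times}$ the form $\phi_{b,\sigma}$ is degenerate precisely when $N_{L/L^{\langle \sigma^2 \rangle}}(-\sigma(b)/b) = 1$. So to prove non-degeneracy it suffices to show that this norm can never equal $1$ when the order of $\sigma$ is odd. The first observation I would record is that the odd-order hypothesis collapses the relevant subgroup: if $d = \ord(\sigma)$ is odd, then $2$ is invertible modulo $d$, so $\sigma^2$ generates the same cyclic group as $\sigma$, i.e. $\langle \sigma^2 \rangle = \langle \sigma \rangle$. Consequently the fixed field $L^{\langle \sigma^2 \rangle}$ coincides with $L^{\langle \sigma \rangle}$, and the extension $L/L^{\langle \sigma \rangle}$ is cyclic of degree $d$ with Galois group $\langle \sigma \rangle$. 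This lets me rewrite the norm appearing in the criterion as the honest cyclic norm $N_{L/L^{\langle \sigma \rangle}}(a) = \prod_{i=0}^{d-1}\sigma^{i}(a)$.

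The next step is a direct computation of this norm on the element $a = -\sigma(b)/b$. Pulling out the sign gives a factor $(-1)^{d}$, while the remaining product
\[
\prod_{i=0}^{d-1} \sigma^{i}\!\left(\frac{\sigma(b)}{b}\right) = \prod_{i=0}^{d-1} \frac{\sigma^{i+1}(b)}{\sigma^{i}(b)}
\]
telescopes to $\sigma^{d}(b)/b = 1$, using $\sigma^{d} = \mathrm{id}$. Hence $N_{L/L^{\langle \sigma \rangle}}(-\sigma(b)/b) = (-1)^{d}$. Since $d$ is odd this equals $-1$, and because $K$ has characteristic different from two we have $-1 \neq 1$. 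Therefore the degeneracy condition \eqref{form1} fails for every nonzero $b$, and Lemma \ref{degeneracy condition} forces $\phi_{b,\sigma}$ to be non-degenerate.

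There is essentially no hard step here; the argument is a short deduction from the preceding lemma together with elementary group theory and a telescoping identity. The only point that requires a moment of care is the reduction $\langle \sigma^2 \rangle = \langle \sigma \rangle$, which is exactly where the oddness of $\ord(\sigma)$ enters and which makes the ambient norm computation tractable. (Had the order been even, $\langle \sigma^2\rangle$ would be a proper subgroup and the telescoping sign would survive only as $(-1)^{d} = 1$, so the conclusion would genuinely fail — this is consistent with the even-degree case being treated separately later in the paper.) I would also note explicitly that the hypothesis $b \neq 0$ is precisely what allows invoking Lemma \ref{degeneracy condition}, which is stated for $b \in L^{\times}$.
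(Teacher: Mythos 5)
Your proof is correct, and it takes a genuinely different route from the paper's. The paper argues by contradiction, leaning on the parallel theory of skew-forms from \cite{GQ09}: assuming $\phi_{b,\sigma}$ degenerate, Lemma \ref{degeneracy condition} gives $-\sigma(b)b^{-1} = \sigma^{2}(c)c^{-1}$; then, since for odd-order $\sigma$ the skew-form $f_{b,\sigma}$ is degenerate (\emph{\cite[Lemma 3]{GQ09}}), the skew degeneracy criterion (\emph{\cite[Lemma 2]{GQ09}}) supplies $d$ with $\sigma(b)b^{-1} = \sigma^{2}(d)d^{-1}$; dividing the two relations yields an element $e = cd^{-1}$ with $\sigma^{2}(e) = -e$, which is impossible when $\sigma$ (hence $\sigma^{2}$) has odd order and the characteristic is not two. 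You instead evaluate the norm in the criterion directly: the observation $\langle \sigma^{2}\rangle = \langle \sigma\rangle$, valid precisely because $\ord(\sigma)$ is odd, turns $N_{L/L^{\langle \sigma^{2}\rangle}}$ into the full cyclic norm over $\langle \sigma\rangle$, where $\sigma(b)/b$ telescopes to $1$ and the sign contributes $(-1)^{\ord(\sigma)} = -1 \neq 1$. Your argument is more self-contained --- it needs nothing about skew-forms, only Lemma \ref{degeneracy condition} plus elementary group theory --- and it isolates exactly where oddness enters. The paper's route, by contrast, reinforces the running analogy between $\phi_{b,\sigma}$ and $f_{b,\sigma}$ that the later sections exploit, but as a proof of this particular lemma yours is shorter and cleaner (the paper's closing sentence even contains a slip, asserting the contradiction is with ``$-1$ is an eigenvalue of $\sigma$'' where it means ``is not''). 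One small caveat: your parenthetical aside about the even case is loosely worded --- for $\ord(\sigma) = 2r$ the norm is taken over $\langle\sigma^{2}\rangle$, which has index $2$, so the product does not telescope at all rather than merely producing a benign sign --- but this aside plays no role in the proof itself.
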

\begin{proof}
   If possible, let there be a $b \in L^\times$ such that $\phi_{b,\sigma}$ is degenerate. Then by Lemma \ref{degeneracy condition} we obtain a $c \in L$ satisfying $$ 
 - \sigma(b) b^{-1} = \sigma^{2} (c) c^{-1}.$$ Again by \emph{\cite[Lemma 3]{GQ09}}, $f_{b,\sigma}$ is degenerate, where $f_{b,\sigma}$ is a skew-form defined by 
 $$ f_{b,\sigma}(x,y) = \tr^L_K(b(x\sigma(y)-\sigma(x)y)), \qquad \forall x, y \in L.$$
 Consequently there exists a $d \in L$ such that $$ 
  \sigma(b) b^{-1} = \sigma^{2} (d) d^{-1},$$ by  \emph{\cite[Lemma 2]{GQ09}}. Thus $$ \sigma(cd^{-1}) = - cd^{-1},$$ which is contrary to the fact that $-1$ is an eigenvalue of $\sigma$. 
\end{proof}
\begin{observation}
In particular if $\sigma = id$ then $\phi_{b,\sigma} \neq 0 $ and $\phi_{b,\sigma} (x,y) = \tr(2bxy)$. It is non-degenerate as it is well known fact that the $\tr$ bilinear form of a  Galois extension is non-degenerate.    
\end{observation}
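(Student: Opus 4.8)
The plan is to reduce the whole statement to one substitution together with the classical non-degeneracy of the trace form. First I would put $\sigma = id$ directly into the defining formula \eqref{defn-sym-frm}: since $id(y)=y$ and $id(x)=x$, the bracketed term $b(x\,id(y)+id(x)\,y)$ collapses to $b(xy+xy)=2bxy$, giving
\[
\phi_{b,id}(x,y) = \tr(2bxy), \qquad \forall\, x,y \in L,
\]
which is exactly the claimed identification. For the non-vanishing, I would use that $b \in L^{\times}$ and that the characteristic is not $2$, so $2b \in L^{\times}$; since the trace map is not identically zero, choose $z \in L$ with $\tr(z)\neq 0$ and evaluate at $x=(2b)^{-1}z$, $y=1$, obtaining $\phi_{b,id}(x,1)=\tr(z)\neq 0$. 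Hence $\phi_{b,id}\neq 0$.

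For non-degeneracy I would compute the radical directly from the corollary's viewpoint, but here it is even simpler to argue by hand. Suppose $x$ lies in the radical, so that $\tr(2bxy)=0$ for all $y \in L$. Because $2b$ is a unit, the map $y \mapsto 2by$ is a $K$-linear bijection of $L$; thus as $y$ ranges over $L$ the element $w=2by$ ranges over all of $L$, and the radical condition becomes $\tr(xw)=0$ for every $w \in L$. The non-degeneracy of the standard trace form $(u,v)\mapsto \tr(uv)$ then forces $x=0$, so the radical is trivial and $\phi_{b,id}$ is non-degenerate.

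There is no real obstacle here: the only non-formal ingredient is the well-known fact, which the parenthetical remark invokes, that the trace form of a Galois (hence separable) extension $L/K$ is non-degenerate. Everything else is the immediate change of variables made possible by $2b$ being invertible; equivalently, $\phi_{b,id}$ is obtained from the trace form by precomposing the second argument with the invertible multiplication $y\mapsto 2by$, and such a precomposition preserves non-degeneracy. This also recovers, in the degenerate-free case $\ord(\sigma)=1$, the general principle behind Lemma \ref{rank for odd}.
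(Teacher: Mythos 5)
Your proposal is correct and takes essentially the same route as the paper: the Observation is justified there by exactly the same two ingredients, namely the substitution $\sigma = id$ giving $\phi_{b,id}(x,y)=\tr(2bxy)$ and the classical non-degeneracy of the trace form of a Galois extension. Your change of variables $w = 2by$ (valid since $\mathrm{char}\,K \neq 2$ and $b \neq 0$, so $2b$ is a unit) merely spells out the verification the paper leaves implicit, and your separate non-vanishing argument is in fact subsumed by non-degeneracy.
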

\begin{lem}
  Suppose that the order of $\sigma$ is even, say $2r$ then there exist $b,b'\in L^{\times}$  such that the symmetric bilinear forms $\phi_{b,\sigma}$ and $\phi_{b',\sigma}$ are degenerate and non-degenerate respectively. 
\end{lem}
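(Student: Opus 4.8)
The plan is to treat the two assertions separately: I would produce the degenerate form by an explicit eigenvector construction, and the non-degenerate one through the norm criterion of Lemma \ref{degeneracy condition}.

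For the degenerate form, I would first exhibit a nonzero $b \in L$ with $\sigma(b) = -b$. Since $\sigma$ has even order $2r$, the additive map $\sum_{j=0}^{2r-1}(-1)^j \sigma^j$ is not identically zero by Dedekind's lemma on the linear independence of the distinct automorphisms $1,\sigma,\dots,\sigma^{2r-1}$; hence $b := \sum_{j=0}^{2r-1}(-1)^j\sigma^j(a)$ is nonzero for a suitable $a$, and a direct reindexing (using $\sigma^{2r}=\mathrm{id}$) gives $\sigma(b) = -b$, whence also $\sigma^{-1}(b)=-b$. For such a $b$ I would verify, via the radical criterion of Corollary \ref{radicalcondtn} applied with $x=b$, that $\sigma^{-1}(b\cdot b) = (\sigma^{-1}b)^2 = b^2 = -b\,\sigma(b)$, so $b$ itself lies in the radical of $\phi_{b,\sigma}$; thus $\phi_{b,\sigma}$ is degenerate. (Equivalently, $-\sigma(b)/b = 1$ has trivial norm in Lemma \ref{degeneracy condition}.)

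For the non-degenerate form, set $E = L^{\langle \sigma^2\rangle}$ and $F = L^{\langle\sigma\rangle}$, so that $F \subseteq E \subseteq L$ with $[E:F]=2$ and $[L:E]=r$. By Lemma \ref{degeneracy condition}, $\phi_{b,\sigma}$ is non-degenerate exactly when $N_{L/E}(-\sigma(b)/b)\ne 1$. Writing $u := N_{L/E}(b)$ and using $\sigma(u) = N_{L/E}(\sigma(b))$ together with $N_{L/E}(-1)=(-1)^r$, this criterion becomes $\sigma(u)/u \ne (-1)^r$. When $r$ is odd, the choice $b'=1$ already works, since then $u=1$ and $1 \ne -1$. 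When $r$ is even the task reduces to producing $b'$ with $u = N_{L/E}(b') \notin F$, i.e. $\sigma(u)\ne u$.

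To produce such a $b'$ I would split according to the size of $F$. If $F$ is infinite, then $\ker N_{E/F}\cong E^\times/F^\times$ is infinite while the $r$-th roots of unity in $E$ number at most $r$; choosing a norm-one element $w = \sigma(e)/e$ (Hilbert 90 for $E/F$) with $w^r \ne 1$ and taking $b' = e \in E$ gives $u = e^r$ and $\sigma(u)/u = w^r \ne 1$. If $F$ is finite, I would instead invoke the surjectivity of $N_{L/E}\colon L^\times \to E^\times$ for finite fields, which at once yields $b'$ with $N_{L/E}(b')\in E^\times\setminus F^\times$. The main obstacle is precisely this even-$r$ step — guaranteeing $N_{L/E}(L^\times)\not\subseteq F$ — and within it the finite-field sub-case, where the norm-one torus $\ker N_{E/F}$ can be entirely $r$-torsion, forcing one to fall back on norm surjectivity rather than the eigenvector-type argument available in the infinite case.
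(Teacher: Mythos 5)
Your proof is correct, and while your degenerate half is essentially the paper's (the paper notes $-1$ is an eigenvalue of $\sigma$ and takes an eigenvector $b$, so that $-\sigma(b)/b = 1 = \sigma^2(1)/1$; your alternating sum merely constructs that eigenvector explicitly), your non-degenerate half takes a genuinely different route. The paper splits on $K$ finite versus infinite: for finite $K$ it counts values, observing that the elements $-\sigma(b)/b$ number $(q^{2r}-1)/(q-1)$ (with $q = |L^{\langle\sigma\rangle}|$) while the elements $\sigma^{2}(c)/c$ number only $(q^{2r}-1)/(q^{2}-1)$; for infinite $K$ it assumes all $\phi_{b,\sigma}$ degenerate, turns the norm identity into a polynomial identity valid at every $\alpha \in K$, and contradicts the fact that a nonzero polynomial cannot vanish on an infinite field. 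You instead push the criterion of Lemma \ref{degeneracy condition} through the norm, recasting non-degeneracy as $\sigma(u)/u \neq (-1)^r$ for $u = N_{L/E}(b)$, $E = L^{\langle\sigma^{2}\rangle}$, $F = L^{\langle\sigma\rangle}$ --- in effect the reformulation the paper only records later as Proposition \ref{GMdegen-crit}, which you correctly justify for a not-necessarily-cyclic $G$ --- and then settle $r$ odd with the explicit witness $b' = 1$, and $r$ even via Hilbert 90 plus a roots-of-unity count ($F$ infinite) or norm surjectivity ($F$ finite). The paper's route buys uniformity in $r$ and self-containedness; yours buys structural information (exactly which $b$ work, an explicit witness for odd $r$) and replaces the polynomial-density trick by Hilbert 90. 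Your closing observation is also apt: over finite fields the norm-one group of $E/F$ can consist entirely of $r$-torsion, so the eigenvector-style argument must give way to norm surjectivity there, which is essentially equivalent in strength to the paper's counting.
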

\begin{proof}
 Since $\sigma$ has even 
 multiplicative order therefore $-1$ is an eigenvalue of $\sigma$, let $ b$ be a corresponding eigenvector. Consequently,  ${\sigma (b )} {b^{-1}}=-1 $. Then taking $c = 1$, we must have  $$- {\sigma (b )} {b^{-1}}= \sigma^{2}(c) c^{-1}.$$ Thus $ \phi_{b,\sigma}$ is degenerate. \newline Next we will show the existancy of a non-degenerate symmetric bilinear form. At first  we assume that $K$ is finite. Let $|L^{\langle\sigma\rangle}|= q$ then  $|L| = q^{2r}$. It is straight forward to observe that the number of elements of the form $-\sigma(b)b^{-1}$ is  $$\frac{q^{2r}-1}{q-1}$$ while the number of elements of the form $\sigma^{2}(c) c^{-1} $ is $$\frac{q^{2r} -1}{q^{2} -1}.$$ As $$ \frac{q^{2r}-1}{q^{2}-1} < \frac{q^{2r}-1}{q-1},$$  our claim follows in this case.\newline
 Now we suppose that $K$ is infinite. If possible let $\phi_{b,\sigma}$ be  degenerate $\forall b \in L$. Then by Lemma  \ref{degeneracy condition} $$N_{L/L^{\langle \sigma^2 \rangle}} (-\sigma(b)/b) = 1,$$ for all $ b \in L^{\times}$. This condition is easily seen to be equivalent to $$ (-1)^{r} b \sigma^{2}(b) \ldots \sigma^{2r-2}(b) =\sigma(b) \sigma^{3}(b)\ldots \sigma^{2r-1}(b),$$ for all $ b\in L^{\times}$. In particular, if we take any element $\alpha \in K$ and replace $ b$ above by $ \alpha - b$, we also obtain 
 $$ (-1)^{r} (\alpha -b) \sigma^{2}(\alpha -b) \ldots \sigma^{2r-2}(\alpha -b) =\sigma(\alpha-b) \sigma^{3}(\alpha -b)\ldots \sigma^{2r-1}(\alpha -b).$$ We can choose an element $b$ for which the $ 2r$ Galois conjugates  $ b, \sigma(b),\cdots,\sigma^{2r-1}(b)$ are  all different and define the polynomials $p_1$ and $p_2$ over $L$ by \begin{align*}
     p_1(x) &= (-1)^{r}(x-b)(x -\sigma^{2}(b))\ldots(x - \sigma^{2r-2}(b)),\\
      p_2(x) &= (x-\sigma(b))(x -\sigma^{2}(b))\ldots(x - \sigma^{2r-1}(b)).
 \end{align*}
 Let $p(x) = p_1(x)- p_2(x)$. Since  $p_1,p_2$ have no roots in common so $p$ is a non-zero polynomial and vanishes at every point in $K$, a contradiction. Thus there exist $b' \in L^{\times}$ such that $\phi_{b',\sigma}$ in non-degenerate.
 \end{proof}
 \begin{rema}\label{rank for even}
     If $\ord(\sigma)=2r$ then as $ b$ runs over the elements in $ L^{\times}$, the symmetric bilinear form $ \phi_{b,\sigma}$ have rank either $ n - n/r$ or $n$. \newline In particular, if $ \ord(\sigma)= 2$ then the elements $\phi_{b,\sigma}$  are either 0 or else non-degenerate. In this case   $\phi_{b,\sigma}$ is zero if and only if $b \in E$, where  $E$ be the eigenspace of $\sigma$ with respect to the eigenvalue $-1$.
 \end{rema}
 \begin{defin}
     Let $\sigma \in \gal(L/K)$. We set \[A^{\sigma}:= \{ \phi_{b,\sigma}: b \in L\}.\]
 \end{defin}
 It can be checked that $A^{\sigma}$ is a subspace of $\sym(L)$ and $ A^{\sigma}= A^{\sigma^{-1}}$. We will investigate the properties of $ A^{\sigma}$.
 \begin{thm}\label{dimension for A}
     Let $\sigma\in \gal(L/K)$. Then
     $$ 
     \dim(A^{\sigma})=
     \begin{cases}
         n, \quad  &\text{if}~\ord(\sigma) \neq 2 \\
         n/2, \quad & \text{if} ~\ord(\sigma) = 2.
     \end{cases}
     $$
     When $\sigma$ has odd order then all the nonzero elements of  $A^{\sigma}$ are non-degenerate.\newline When $ \ord(\sigma)= 2r $ then the non-zero elements of $ A^{\sigma}$ either non-degenerate or else have rank $ n-n/r$.
 \end{thm}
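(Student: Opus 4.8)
The plan is to realize $A^{\sigma}$ as the image of a $K$-linear map and reduce the dimension count to a kernel computation, after which the rank assertions follow by transporting the earlier per-form results along this map. Define $\Phi \colon L \to \sym_K(L)$ by $\Phi(b) = \phi_{b,\sigma}$. Since $\tr^L_K$ is $K$-linear and $b \mapsto b(x\sigma(y)+\sigma(x)y)$ is $K$-linear in $b$, the map $\Phi$ is $K$-linear, and its image is by definition $A^{\sigma}$. By rank–nullity $\dim A^{\sigma} = n - \dim\ker\Phi$, so the whole problem is to identify those $b$ for which $\phi_{b,\sigma}$ vanishes identically.

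To compute $\ker\Phi$, I would observe that $\phi_{b,\sigma} = 0$ is equivalent to every $x \in L$ lying in the radical of $\phi_{b,\sigma}$, which by Corollary \ref{radicalcondtn} means $\sigma^{-1}(bx) = -b\sigma(x)$ for all $x \in L$. Specialising to $x = 1$ gives $\sigma^{-1}(b) = -b$, i.e.\ $\sigma(b) = -b$; feeding this back into the identity $\sigma^{-1}(b)\sigma^{-1}(x) = -b\sigma(x)$ and cancelling $b \neq 0$ yields $\sigma^{-1}(x) = \sigma(x)$ for every $x$, hence $\sigma^{2} = \mathrm{id}$. Thus a nonzero $b$ lies in $\ker\Phi$ if and only if $\sigma^{2} = \mathrm{id}$ and $b$ is a $(-1)$-eigenvector of $\sigma$.

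This already settles the dimension formula. If $\sigma = \mathrm{id}$, the condition $\sigma(b)=-b$ reads $b=-b$, which forces $b = 0$ because $\operatorname{char} K \neq 2$; if $\ord(\sigma) \geq 3$ then $\sigma^{2} \neq \mathrm{id}$. In either case $\ker\Phi = 0$ and $\dim A^{\sigma} = n$. When $\ord(\sigma) = 2$, the kernel is precisely the $(-1)$-eigenspace $E$ of $\sigma$; here $\sigma^{2} = \mathrm{id}$ with $\operatorname{char} K \neq 2$ makes $\sigma$ diagonalisable, so $L = L^{\langle\sigma\rangle}\oplus E$, and since $[L:L^{\langle\sigma\rangle}] = 2$ gives $\dim_K L^{\langle\sigma\rangle} = n/2$ we obtain $\dim E = n/2$ and $\dim A^{\sigma} = n - n/2 = n/2$.

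For the rank assertions I would note that a nonzero element of $A^{\sigma}$ is $\phi_{b,\sigma}$ for some $b \notin \ker\Phi$, in particular $b \in L^{\times}$. If $\ord(\sigma)$ is odd, Lemma \ref{rank for odd} directly gives non-degeneracy. If $\ord(\sigma) = 2r$, then $\ord(\sigma^{2}) = r$, so $[L:L^{\langle\sigma^2\rangle}] = r$, and Lemma \ref{degeneracy condition} shows that a degenerate $\phi_{b,\sigma}$ has rank $n - n/r$; together with Remark \ref{rank for even} this means every nonzero form in $A^{\sigma}$ is either non-degenerate or has rank $n-n/r$. The one delicate point is the kernel computation in the second paragraph: one must extract the two independent consequences $\sigma(b) = -b$ and $\sigma^{2} = \mathrm{id}$ from the single functional equation $\sigma^{-1}(bx) = -b\sigma(x)$, and keep track of the degenerate case $\sigma = \mathrm{id}$ where the eigenvector condition collapses by $\operatorname{char} K \neq 2$.
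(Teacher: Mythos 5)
Your proposal is correct and follows essentially the same route as the paper: realize $A^{\sigma}$ as the image of the $K$-linear map $b \mapsto \phi_{b,\sigma}$, compute the kernel via Corollary \ref{radicalcondtn} by specializing to $x=1$ and substituting back to force $\sigma(b)=-b$ and $\sigma^{2}=\mathrm{id}$, then identify the kernel with the $(-1)$-eigenspace (of dimension $n/2$) when $\ord(\sigma)=2$ and conclude by rank–nullity. Your handling of the rank assertions via Lemma \ref{rank for odd}, Lemma \ref{degeneracy condition}, and Remark \ref{rank for even} is also exactly what the paper's statement implicitly relies on (the paper's proof leaves that part unstated), so your write-up is, if anything, slightly more complete on that point.
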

 \begin{proof}
     Let $\psi:L \rightarrow A^{\sigma}$ be a map define by $$ b \rightarrow \phi_{b,\sigma}.$$ Clearly $ \psi$ is $K$-linear. Let $b \in \text{ker}~ \psi$. By Corollary \ref{radicalcondtn}, $\phi_{b,\sigma}= 0$ implies $$ -bx = \sigma(b)\sigma^{2}(x),$$ for all $ x \in L$. Taking $x = 1$ we obtain $ \sigma(b) = - b$. Thus if $ b\neq 0$, we must have $$ \sigma^{2}(x)  = x$$ for all $ x \in L$. Consequently $ \ord(\sigma) = 2$ otherwise if $ \sigma =\text{id}$ then there is no non-zero $b\in L$ such that $ \sigma(b)= -b$.  Thus if $ \ord(\sigma) \neq 2$ then  $\psi$ is an isomorphism, consequently  $ \dim(A^{\sigma}) = n$. On the otherhand  if $ \ord(\sigma) = 2 $ then  $[L^{\langle \sigma \rangle} : K] = \frac{n}{2} $. Consequently the dimension of the eigenspace corresponding to the eigenvalue $-1$  is also $n/2$, thus $\dim(A^{\sigma})= \frac{n}{2}$. 
 \end{proof}
 Next we will show that when $[L:K]=n$ is odd, the space $ \sym(L)$ is the direct sum of $ (n+1)/2$, $n$-dimensional subspaces of the form $ A^{\sigma}$ where $ \sigma \in \gal(L/K)$. Suppose that $ n = 2m +1$ and $\gal(L/K)= \{1, \sigma_1,\sigma_1^{-1}, \ldots,\sigma_m,\sigma_m^{-1}\}$. \newline We shall correspondingly write $ A^{i}$ in place of $ A^{\sigma_{i}}$ and $ A^{0}:=\{\phi_{b,id}: b \in L\}$, so that $$ A ^{i} = \{ \phi_{b,\sigma_i} : b \in L\}.$$
 Let $L^{m+1}$ denote the set of ordered $(m+1)$-tuples $(x_0,x_1,\ldots,x_{m+1})$
  of elements of $L$. $ L^{m+1}$ is naturally a vector space of dimension $ n (m+1)= n (n+1)/2$ over $K$ under the usual rules  of addition of $m$-tuples and scaler multiplication.
  \begin{thm}
  \label{odd decomposition}
    Suppose that $ n = [L : K] $ is odd and the Galois group $$G = \{1,\sigma_1,\cdots,\sigma_{m}, \sigma_{1}^{-1},\cdots,\sigma_{m}^{-1}\}$$ where  $ m = (n - 1)/2 $. Then there is a direct  decomposition 
	\begin{equation}\label{odd direct decom}
	 \alt_K(L)= A^{0} \oplus A^{1}\oplus A^{2}\oplus \cdots\oplus A^{m}, 
	\end{equation}  
  Where each $A^{i}$ is an $n$-dimensional $n$-subspace of $\sym(L)$ for $ 1 \leq i \leq m $.
  \end{thm}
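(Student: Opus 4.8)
The plan is to prove that the sum on the right of \eqref{odd direct decom} is direct and then match dimensions. By Theorem~\ref{dimension for A} every summand is $n$-dimensional: since $n=[L:K]$ is odd, no element of $G$ can have order $2$, so $\ord(\sigma_i)\neq 2$ for $1\le i\le m$ and $\ord(\mathrm{id})=1$, whence $\dim A^i=n$ for $0\le i\le m$. Consequently, if the sum is direct the right-hand side has dimension $n(m+1)=n(n+1)/2$, which is precisely $\dim\sym(L)$; since each $A^i$ is a subspace of $\sym(L)$, proving directness will automatically force the direct sum to exhaust the whole space $\sym(L)$ of symmetric bilinear forms and so yield \eqref{odd direct decom}. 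The engine throughout is the linear independence of distinct field automorphisms (Dedekind--Artin).

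To establish directness I would suppose $\sum_{i=0}^m \phi_{b_i,\sigma_i}=0$ with $\sigma_0=\mathrm{id}$ and show every $b_i=0$. Rewriting each summand by the opening Lemma of this section, $\phi_{b,\sigma}(x,y)=\tr^L_K\bigl((\sigma^{-1}(bx)+b\,\sigma(x))\,y\bigr)$, and using $\sigma_i^{-1}(b_ix)=\sigma_i^{-1}(b_i)\sigma_i^{-1}(x)$ together with $\sigma_0=\mathrm{id}$, the hypothesis reads
\begin{equation*}
\tr^L_K\!\left(\Bigl[\,2b_0x+\sum_{i=1}^m\bigl(b_i\,\sigma_i(x)+\sigma_i^{-1}(b_i)\,\sigma_i^{-1}(x)\bigr)\Bigr]y\right)=0 \qquad \forall\,x,y\in L.
\end{equation*}
Because the trace form $(u,v)\mapsto\tr^L_K(uv)$ is nondegenerate, the bracketed element must vanish for every $x\in L$.

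Now I would invoke independence of characters. Since $n$ is odd we have $\sigma_i\neq\sigma_i^{-1}$ and $\sigma_i\neq\mathrm{id}$, and the hypothesis that $G=\{1,\sigma_1,\sigma_1^{-1},\dots,\sigma_m,\sigma_m^{-1}\}$ lists all $n$ elements means $\mathrm{id},\sigma_1,\sigma_1^{-1},\dots,\sigma_m,\sigma_m^{-1}$ are pairwise distinct automorphisms. Hence the identity $2b_0\,\mathrm{id}+\sum_{i=1}^m\bigl(b_i\,\sigma_i+\sigma_i^{-1}(b_i)\,\sigma_i^{-1}\bigr)=0$ in $\mathrm{End}_K(L)$ is an $L$-linear dependence among distinct characters, so every coefficient is zero: $2b_0=0$ and $b_i=\sigma_i^{-1}(b_i)=0$ for $1\le i\le m$. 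As $\mathrm{char}\,K\neq 2$ this gives $b_0=b_1=\cdots=b_m=0$, so the sum is direct. Combined with the dimension count above, this proves \eqref{odd direct decom}; and since every $\sigma_i$ as well as the identity has odd order, Lemma~\ref{rank for odd} shows all nonzero forms in each $A^i$ are non-degenerate, making each $A^i$ an $n$-subspace.

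The only genuinely delicate point is the closing invocation of independence of characters, and in particular the distinctness $\sigma_i\neq\sigma_i^{-1}$: it is exactly here that the oddness of $n$ is indispensable, since it guarantees that each parameter $b_i$ appears as the coefficient of a character ($\sigma_i$) occurring nowhere else in the relation, forcing its vanishing. Everything else---the rewriting via the trace identity, the reduction by nondegeneracy, and the dimension bookkeeping---is routine, and the partition of $G$ into $\{\mathrm{id}\}$ and the pairs $\{\sigma_i,\sigma_i^{-1}\}$ is supplied outright by the hypothesis on $G$.
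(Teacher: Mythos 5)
Your proof is correct and is essentially the paper's own argument: the paper packages it as the $K$-linear map $\psi:L^{m+1}\to\sym(L)$, $(b_0,\dots,b_m)\mapsto\sum_{i=0}^m\phi_{b_i,\sigma_i}$, proves injectivity by exactly your reduction (nondegeneracy of the trace form plus Artin's independence of the distinct characters $\mathrm{id},\sigma_1,\sigma_1^{-1},\dots,\sigma_m,\sigma_m^{-1}$), and concludes with the same dimension count $n(m+1)=n(n+1)/2=\dim\sym_K(L)$, invoking Lemma~\ref{rank for odd} for the $n$-subspace property. If anything, you are more careful than the paper at the $i=0$ term, where the coefficient of $\mathrm{id}$ is $2b_0$ and $\mathrm{char}\,K\neq 2$ is needed to conclude $b_0=0$.
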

  \begin{proof}
     We define a $K$-linear map $$ \psi: L^{m+1} \rightarrow \sym(L)$$ by
     $$ \psi(b_0,\ldots,b_m)= \sum_{i=0}^{m} \phi_{b_i,\sigma_i}$$
  We claim that $ \psi$ is an isomorphism. Indeed let  $$ \psi(b_0,b_1,\cdots,b_m)=0,$$ then $$ \psi(b_0,b_1,\ldots,b_m)(x,y)= \sum_{i=0}^{m}\phi_{b_i,\sigma_i}(x,y)=0$$ for all $x$ and $y$. As trace form is linear it follows from Corollary \ref{radicalcondtn} that $$\sum_{i=0}^{m} \sigma_i^{-1}(b_ix) - b_i \sigma_i(x)=0$$ for all $x \in L$. Thus $$\sum_{i=0}^{m} \sigma_i^{-1}(b_i)\sigma_i^{-1} - b_i \sigma_i=0.$$  By Artin's theorem on the independence of characters we obtain $$b_0=b_1=\ldots=b_m=0.$$ Now  as $ L^{m+1}$ and $\sym(L)$ have same dimension over $ K$ thus $\psi$ is an isomorphism. Cosequently we obtain the decomposition \eqref{odd direct decom} of $\alt_K(L)$. Moreover,  it follows from the Lemma \ref{rank for odd}  that each $A^i$ is an $n$-subspace.
  \end{proof}
  \begin{thm}\label{even dcmpo}
      Suppose that $ n = [L : K] $ is even and the Galois group $$G = \{ \tau_1,\cdots,\tau_{k},1,\sigma_1,\cdots,\sigma_{m}, \sigma_{1}^{-1},\cdots,\sigma_{m}^{-1}\},$$ where $ \{\tau_{1},\tau_{2} ,\cdots,\tau_{k} \}$ are the involutions of $ G $, then there is a direct  decomposition 
	\begin{equation}\label{ev decompose}
	\alt_K(L)= B^{1}
\oplus B^{2} \oplus \cdots\oplus B^{k}\oplus A^0 \oplus A^{1} \oplus A^{2} \oplus \cdots \oplus A^{m}.    
	\end{equation}
	 where $B^{i}: =A^{\tau_i}$ is an $n$-subspace of 
  dimension $n/2$ for all $1 \le i \le k$ and   $A^{j}: = A^{\sigma_j}$ ($1 \le j \le m$) has dimension $n$. Moreover,  the rank of each element in $A^j$  is given in accordance with the results of Lemma \ref{rank for odd} and Remark \ref{rank for even}
  \end{thm}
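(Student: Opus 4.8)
The plan is to follow the pattern of Theorem \ref{odd decomposition}: build an explicit $K$-linear map onto $\sym(L)$, verify that its domain has the same dimension as $\sym(L)$, and then prove injectivity via Artin's theorem on the independence of characters. The one genuinely new feature compared with the odd case is that the involutions $\tau_i$ cannot be fed into the map through all of $L$, because by Theorem \ref{dimension for A} the assignment $b\mapsto\phi_{b,\tau_i}$ has a nontrivial kernel; so each involution must be handled through a suitable $n/2$-dimensional subspace on which the assignment is injective.

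First I would fix, for each involution $\tau_i$, the eigenspace decomposition $L=L^{\langle\tau_i\rangle}\oplus E_i$ into the $(+1)$- and $(-1)$-eigenspaces of $\tau_i$, which is legitimate since the characteristic is not $2$. By Remark \ref{rank for even} one has $\phi_{b,\tau_i}=0$ exactly when $b\in E_i$, so $c\mapsto\phi_{c,\tau_i}$ is injective on the fixed field $L^{\langle\tau_i\rangle}$, of dimension $n/2$, with image all of $B^i$. I would then define
\[
\psi\colon\Bigl(\bigoplus_{i=1}^{k}L^{\langle\tau_i\rangle}\Bigr)\oplus L^{\,m+1}\longrightarrow\sym(L),
\]
\[
\psi(c_1,\dots,c_k,b_0,\dots,b_m)=\sum_{i=1}^{k}\phi_{c_i,\tau_i}+\sum_{j=0}^{m}\phi_{b_j,\sigma_j},\qquad\sigma_0=\mathrm{id}.
\]
Using $k+2m+1=n$, the domain has dimension $k\cdot\tfrac n2+(m+1)n=\tfrac{n(n+1)}{2}=\dim\sym(L)$, so it suffices to establish injectivity.

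For injectivity I would assume $\psi(\dots)=0$ and, exactly as in the proof of Theorem \ref{odd decomposition}, invoke Corollary \ref{radicalcondtn} together with the linearity and non-degeneracy of the trace form to reduce the vanishing of the symmetric form to the operator identity
\[
2b_0\,\mathrm{id}+\sum_{i=1}^{k}2c_i\,\tau_i+\sum_{j=1}^{m}\bigl(b_j\,\sigma_j+\sigma_j^{-1}(b_j)\,\sigma_j^{-1}\bigr)=0
\]
as a map $L\to L$; here I have used $\tau_i^{-1}=\tau_i$ and $\tau_i(c_i)=c_i$ to collapse the involution terms to $2c_i\tau_i$. Since $\mathrm{id},\tau_1,\dots,\tau_k,\sigma_1,\sigma_1^{-1},\dots,\sigma_m,\sigma_m^{-1}$ are $n$ pairwise distinct elements of $G$, Artin's independence of characters forces every coefficient to vanish, and as the characteristic is not $2$ this yields $b_0=c_i=b_j=0$. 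Hence $\psi$ is an isomorphism, giving the decomposition \eqref{ev decompose}; the dimension claims for $B^i$ and $A^j$ are then immediate, and the rank assertions follow by quoting Lemma \ref{rank for odd} and Remark \ref{rank for even} (the latter, in the order-$2$ case, also showing each $B^i$ is an $n$-subspace). I expect the only real obstacle to be the bookkeeping for the involutions: one must choose the restricted domains so that they are genuinely complementary to the kernels and so that, after expansion, the character identity still involves all $n$ distinct automorphisms with their own coefficients; once this is arranged the argument runs parallel to the odd case.
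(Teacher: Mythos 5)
Your proposal is correct and follows essentially the same route as the paper: a $K$-linear map that sums the forms $\phi_{c_i,\tau_i}$ and $\phi_{b_j,\sigma_j}$, a dimension count against $\dim\sym_K(L)=n(n+1)/2$, and Artin's independence of characters. If anything, your version is tighter than the paper's: the paper maps from all of $L^{k+1+m}$ and asserts, citing only Theorem \ref{dimension for A}, that the kernel sits entirely in the involution slots (moreover describing it via the fixed fields $L^{\langle\tau_i\rangle}$ rather than the $(-1)$-eigenspaces of the $\tau_i$, which is what the kernel of $b\mapsto\phi_{b,\tau_i}$ actually is), whereas you restrict the involution slots to genuine complements of those kernels and carry out the character-independence argument for the full mixed sum, which is precisely the step needed to rule out cross-cancellation between the different slots.
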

  \begin{proof}
      We define a $K$-linear map $$ \psi: L^{k+1+m} \rightarrow \sym(L)$$ by
     $$ \psi(b_0,\ldots,b_m)= \sum_{i=1}^k \phi_{c_i,\tau_i} +\sum_{j=0}^{m} \phi_{b_i,\sigma_j}.$$ 
    By Theorem \ref{dimension for A}  the kernel of $\phi$ consists of all elements $$(b_1,\ldots,b_k,0,\ldots ,0),$$ where $  b_i \in L^{\langle\tau_i\rangle}$, $ 1 \leq i \leq k$. Consequently, $\dim\ker \psi= kn/2$.  Furthermore, since $$ \dim L^{k+m+1} - \dim \ker \psi = (k+m+1)n - \frac{kn}{2}= n(n+1)/2= \sym(L),$$ it follows that $\psi$ is surjective. 
  \end{proof}
  \section{Some Results on Cyclic Galois extension }
   In the earlier section, we have decomposed the space $ \sym_{K}(L)$ into a direct sum of constant-rank $n$-subspaces $ A^i$ when $[L:K]$ is odd. Our aim in this section is to establish the decomposition of $ \sym_K(L)$ into constant-rank subspaces, particularly when $G$ is cyclic and $n$ is even.
  \begin{notation} \label{L_i}  
\textbf{Throughout this section $L/K$ denotes a cyclic extension with Galois group $\gal(L/K) = \langle \sigma \rangle= n =2r >1$}. For the sake of convenience in what follows we shall denote the subfield $L^{\langle \sigma^{i} \rangle}$ as $L_i$.
\end{notation}
 We begin by noting the following restatement of the degeneracy criterion Lemma \ref{degeneracy condition}.
 \begin{prop}
\label{GMdegen-crit}
Let $b \in L$. Then the skew-form $\phi_{b,\sigma}$ is degenerate if and only if 
\begin{equation}\label{form1} 
(-1)^r N_{L/L_{2}} (\sigma(b)/b) = 1, 
\end{equation}
that is, $f_{b,\sigma}$ is degenerate if and only if \begin{equation} \label{form2}
N_{L/L_{2}} (b) =  b\sigma^{2}(b)\cdots \sigma^{n-2}(b) 
\end{equation}
is an  eigenvector of $\sigma$ corresponds to either the eigenvalue $-1$ or $1$ depending on whether $r$ is odd or even.
\end{prop}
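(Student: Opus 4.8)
The plan is to derive everything directly from the degeneracy criterion already established in Lemma~\ref{degeneracy condition}, which states that $\phi_{b,\sigma}$ is degenerate exactly when $N_{L/L_2}(-\sigma(b)/b)=1$. The only extra inputs needed are multiplicativity of the norm, the explicit value $N_{L/L_2}(-1)$, and the Galois-equivariance of the norm map. Since $n=2r$ is even, the subgroup $\langle\sigma^2\rangle$ has order $r$, so $[L:L_2]=r$ and $N_{L/L_2}(a)=\prod_{j=0}^{r-1}\sigma^{2j}(a)$ for every $a\in L$.

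First I would separate out the sign. Because $-1\in K$ is fixed by every power of $\sigma$, multiplicativity gives $N_{L/L_2}(-\sigma(b)/b)=N_{L/L_2}(-1)\,N_{L/L_2}(\sigma(b)/b)=(-1)^r\,N_{L/L_2}(\sigma(b)/b)$, using $N_{L/L_2}(-1)=(-1)^{[L:L_2]}=(-1)^r$. Substituting into Lemma~\ref{degeneracy condition} immediately converts the degeneracy condition into $(-1)^r N_{L/L_2}(\sigma(b)/b)=1$, which is exactly \eqref{form1}. This step is purely formal.

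Next I would introduce the element appearing in \eqref{form2}, namely $B:=N_{L/L_2}(b)=b\sigma^2(b)\cdots\sigma^{n-2}(b)$, and compute $N_{L/L_2}(\sigma(b)/b)=\sigma(B)/B$. The key identity is $N_{L/L_2}(\sigma(b))=\sigma(B)$: applying $\sigma$ to each factor of $B$ and using that $\sigma$ commutes with every element of $\langle\sigma^2\rangle$, the product $\prod_{j=0}^{r-1}\sigma^{2j+1}(b)$ is simultaneously $\sigma\bigl(N_{L/L_2}(b)\bigr)$ and the $L/L_2$-norm of $\sigma(b)$. With this, \eqref{form1} reads $(-1)^r\,\sigma(B)/B=1$, i.e. $\sigma(B)=(-1)^r B$. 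Splitting on the parity of $r$ then yields $\sigma(B)=-B$ when $r$ is odd and $\sigma(B)=B$ when $r$ is even, which is precisely the asserted description of $B=N_{L/L_2}(b)$ as an eigenvector of $\sigma$ for the eigenvalue $-1$ or $1$ respectively.

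The computation is short and amounts to careful bookkeeping; the only place requiring attention is tracking the indices in the two products and verifying the equivariance $N_{L/L_2}(\sigma(b))=\sigma\bigl(N_{L/L_2}(b)\bigr)$, which rests on $\sigma$ normalizing $\langle\sigma^2\rangle$. I do not anticipate a genuine obstacle, since, as the surrounding text indicates, the statement is a reformulation of Lemma~\ref{degeneracy condition} rather than a new result.
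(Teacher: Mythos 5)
Your proposal is correct and follows essentially the same route as the paper: both treat the proposition as a reformulation of Lemma~\ref{degeneracy condition}, pulling out the sign via $N_{L/L_2}(-1)=(-1)^r$ and rewriting $N_{L/L_2}(\sigma(b)/b)$ as $\sigma\bigl(N_{L/L_2}(b)\bigr)/N_{L/L_2}(b)$ to obtain the eigenvector condition. The only difference is that you make explicit the equivariance identity $N_{L/L_2}(\sigma(b))=\sigma\bigl(N_{L/L_2}(b)\bigr)$, which the paper dismisses as ``easily seen.''
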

\begin{proof}
    The first assertion is now clear in view of the Lemma \ref{degeneracy condition}. Moreover the condition $(-1)^r N_{L/L_{2}} (\sigma(b)/b) = 1$ is easily seen to be equivalent to the condition $$(-1)^{r}N_{L/L_2 }(b)= \sigma (N_{L/L_2 }(b)) $$ where $ N_{L/L_2 }(b)= b\sigma^{2}(b)\cdots \sigma^{n-2}(b)$. 
\end{proof}
Suppose that $\sigma^i$ is not an involution. By Lemma \ref{degeneracy condition}, the symmetric bilinear form  $\phi_{b,\sigma^i} \in A^i \subseteq \sym_K(L) $ is degenerate if and only if $-\sigma^i(b)/b= \sigma^{2i}(c)/c$ for some $ c \in L$.  As $\sigma^{2i}$ is a generator for  $\gal(L/L_{2i})$,  in view of Hilbert Theorem 90, $\phi_{b,\sigma^i}$ is degenerate if and only if  $N_{L/L_{2i} }(-\sigma^i(b)/b) = 1$. 
   A glance at Proposition  \ref{GMdegen-crit} above shows that this is precisely the condition for the symmetric bilinear form $ \displaystyle \phi^{\sim}_{b,\sigma^{i}} \in \sym_{L_i}(L)$ defined by
    \[ \phi^{\sim}_{b,\sigma^{i}} = \tr^L_{L_i} (b(x\sigma(y)+\sigma(x)y)), \qquad \forall x, y \in L. \] 
   to be degenerate (we write $\phi^{\sim}_{b,\sigma^{i}}$ instead of $\phi^{}_{b,\sigma^{i}}$ to emphasize the fact that we are now considering $L$ as $L_i$-space). 
   
   Let us write
   $\displaystyle A^{\sim 1} := \{ \displaystyle \phi^{\sim}_{b,\sigma^{i}} \mid b \in L\}$.
   In view of Theorem \ref{dimension for A} we then have a $K$-isomorphism $A^i \cong L$ via $\phi_{b,\sigma^i} \mapsto b$ and an $L_i$-isomorphism $L \cong A^{\sim 1}$ via
   $b \mapsto \phi^{\sim}_{b,\sigma^{i}}$. The composition of these maps clearly yields a $K$-isomorphism 
   $A^i \cong  A^{\sim 1}$. The following is then clear.
   \begin{rema}\label{crspn}
   With respect to the above isomorphism if an $L_i$-subspace $\mathcal W \le A^{\sim 1}$ has all its non-zero symmetric bilinear forms non-degenerate (or all its non-zero symmetric bilinear forms forms degenerate)  then the same is true for the corresponding (K)-subspace in $A^i$. 
   \end{rema}
   \begin{rema}\label{cyclic even decomposition}
    If $ L/K$ is cyclic Galois extension of degree $ n $  with $G= \gal(L/K) = \langle \sigma \rangle$ we define $A^i := A^{\sigma^{i}}$. Thus $ A^{i}= \{f_{b,\sigma^i}: b \in L\}$. If  $n$ is even  then there is a unique involution $ \tau_1 = \sigma^{n/2}$ and in this case we denote $B^1 : =A^{\tau_1}=\{f_{b,\sigma^{n/2}}: b\in L\}$. Then the decomposition \eqref{ev decompose} becomes 
\begin{equation}\label{ dcmpstn for cyclic}
\alt_K(L)=  B^{1}\oplus A^0 \oplus A^{1}\oplus A^{2}\oplus \cdots\oplus A^{m}.
\end{equation}
Note that $ B^1$ is an $n$-subspace of dimension $ n$ and by Lemma \ref{rank for odd}, $A^i$ is an $n$-subspace of dimension $n$ if $\sigma^i$ has order odd.
 \end{rema}
 
  \begin{thm} \label{odd dcmposn for any field}
   Let $K$ be a field and $n = 2k$, where  $ k\geq 1$ is odd.  Let $L$ be any cyclic  extension of $K$ of degree $n$ with Galois group $G = \langle \sigma \rangle$. 
   Then
   \begin{equation} \label{odd dcmposn for any field eq}
 A^1 = \mathcal{U}_{1} \oplus \mathcal{V}_{1},
 \end{equation}
 where    $ \mathcal{U}_{1}$ is an $n$-subspace  of dimension $k $ and $\mathcal{V}_{1}$ is an $ (n-2)$-subspace of dimension $k$.
 \end{thm}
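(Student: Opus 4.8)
The plan is to transport the problem from the form space $A^1$ to the field $L$ via the $K$-isomorphism $\psi\colon L \to A^1$, $b \mapsto \phi_{b,\sigma}$ of Theorem \ref{dimension for A} (which is genuinely an isomorphism here, since $\ord(\sigma) = 2k > 2$), and then to exhibit two complementary $K$-subspaces $U, V \le L$, each of dimension $k$, on which the degeneracy behaviour of $\phi_{b,\sigma}$ is constant. Since $\ord(\sigma) = 2k$ with $r = k$ odd, Lemma \ref{degeneracy condition} together with Proposition \ref{GMdegen-crit} tells us that $\phi_{b,\sigma}$ is degenerate precisely when $N := N_{L/L_2}(b) = b\sigma^2(b)\cdots\sigma^{n-2}(b)$ satisfies $\sigma(N) = -N$, and in that degenerate case Remark \ref{rank for even} (with $r = k$) forces the rank to be exactly $n - n/k = n-2$. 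Thus everything reduces to controlling the sign of the $\sigma$-eigenvalue of $N$.

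The key construction is to use the unique involution $\tau = \sigma^{k} \in G$. Over the field $K$ of characteristic $\neq 2$, the operator $\tau$ splits $L$ into its $\pm 1$ eigenspaces, namely $F := L^{\langle \sigma^k\rangle} = \ker(\sigma^k - 1)$ and $L^- := \ker(\sigma^k + 1)$. Because $\tau$ is an involution on the $n$-dimensional space $L$ with fixed field $F$ of degree $[F:K] = k$, each eigenspace has dimension $k$, and $F \cap L^- = \{0\}$ forces $L = F \oplus L^-$. I would then set $\mathcal{U}_1 := \psi(F)$ and $\mathcal{V}_1 := \psi(L^-)$; these are $K$-subspaces of $A^1$ of dimension $k$ with $A^1 = \mathcal{U}_1 \oplus \mathcal{V}_1$, so only the rank claims remain.

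The heart of the argument is a short norm computation. Writing $N = \prod_{j=0}^{k-1}\sigma^{2j}(b)$ and $\sigma(N) = \prod_{j=0}^{k-1}\sigma^{2j+1}(b)$, I would observe that the substitution $e \mapsto e + k \pmod{2k}$ is a bijection that exchanges the even and odd residues modulo $2k$ precisely because $k$ is odd. For $b \in F$ we have $\sigma^{e+k}(b) = \sigma^e(b)$, whence $\sigma(N) = N$; since $N \neq 0$, the criterion $\sigma(N) = -N$ fails and $\phi_{b,\sigma}$ is non-degenerate, so $\mathcal{U}_1$ is an $n$-subspace. For $b \in L^-$ we have $\sigma^{e+k}(b) = -\sigma^e(b)$, so each of the $k$ factors picks up a sign and $\sigma(N) = (-1)^k N = -N$; hence $\phi_{b,\sigma}$ is degenerate of rank $n-2$, making $\mathcal{V}_1$ an $(n-2)$-subspace.

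The main obstacle, and the only place where real care is needed, is the eigenvalue computation for $N$: one must track both the factor $N_{L/L_2}(-1) = (-1)^{[L:L_2]} = (-1)^k$ coming from Lemma \ref{degeneracy condition} and the parity flip above, and it is exactly the oddness of $k$ that makes these two signs work in tandem to separate the degenerate and non-degenerate loci. For even $k$ the signs would reinforce rather than cooperate, which is why this clean splitting is special to $n = 2k$ with $k$ odd. I would also flag that the argument presumes $\ord(\sigma) = 2k > 2$, so that $\psi$ is an isomorphism and $\dim A^1 = n$; the case $k=1$ is degenerate and should be treated separately or excluded.
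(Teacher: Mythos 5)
Your proposal is correct and follows essentially the same route as the paper: the paper also splits $L$ using the involution $\sigma^k$, taking $U = L^{\langle \sigma^k\rangle}$ and $V = jU$ with $j$ a $-1$ eigenvector of $\sigma$ (so $jU$ is exactly your $L^{-}$), and applies the same degeneracy criterion, Proposition \ref{GMdegen-crit}; your direct index-shift computation of $\sigma(N_{L/L_2}(b))$ merely replaces the paper's combination of $N_{L/L_2}(u) \in L_2 \cap L_k = K$ with norm multiplicativity. Your caveat about $k=1$ is also well taken: the paper's statement allows $k=1$, but its proof (like yours) needs $\ord(\sigma) \neq 2$ for $b \mapsto \phi_{b,\sigma}$ to be injective, and indeed the dimension claim for $\mathcal{V}_1$ fails in that case.
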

 \begin{proof}
    Let $U := L_{k}$ and $0 \ne u \in U$. Clearly \[ \sigma^2(u), \sigma^4(u), \ldots, \sigma^{2k-2}(u) \in U. \]
It follows that \[ N_{L/L_2}(u) \in L_2 \cap U = L_2 \cap L_{k} = K. \] 
 By Proposition \ref{GMdegen-crit}   the skew-form $f_{u, \sigma}$ is non degenerate, thus $ \mathcal{U}_{1}$ is an $n$-subspace. \newline
 Note that $  N_{L/L_2}(\sigma(u)/u) = 1 $. Let  $j$ is an eigenvector  of $\sigma$ corresponding to the eigenvalue $-1$ then  $$  N_{L/L_2}(\sigma(j)/j) =- 1 . $$ By  Proposition \ref{GMdegen-crit}  $\phi_{j,\sigma}$ is degenerate. Set $ V:= jU
 $. Then for $0 \ne u \in U$
\[ N_{L/L_2}(\frac{\sigma(ju)}{ju}) = N_{L/L_2}(\frac{\sigma(j)}{j}) N_{L/L_2}(\frac{\sigma(u)}{u}) = (-1).1 =-1.\]
It thus follows by  proposition \ref{GMdegen-crit} that all the nonzero skew-forms $ \phi_{b,\sigma}$ where $b$  lies in the subspace $V = jU$ (of dimension $k$) are degenerate. Clearly $U \cap V = \{0\}$ so $L= U \oplus V$. By Theorem \ref{dimension for A} the subspace $U$ of $L $ corresponds to a subspace $\mathcal{U}_{1}$ of $\sym_K(L)$ with the same dimension defined by $\mathcal{U}_1:=\{\phi_{b,\sigma}: b \in U\}$. Similarly $V$ corresponds to $\mathcal{V}_{1} \le \sym_K(L)$ such that $\dim(V)=\dim(\mathcal{V}_1)$.
Then the decomposition \eqref{odd dcmposn for any field eq} follows.
 \end{proof}
 \begin{coro}\label{dcompsn-Ai-for-odd}
    Let $K $ be a field and $n$ be even.  Suppose $L$ is a cyclic Galois extension of a field $K$ of degree $n$ with Galois group $\gal(L/K) = \langle \sigma \rangle$. If $ \ord(\sigma^{i}) \equiv 2~(\mo 4)$  and  $ \ord(\sigma^{i}) \ne 2 $ then   \[A^i  = \mathcal{U}_i \oplus \mathcal{V}_i,\] where  $ \mathcal{U}_i$ is an $n$-subspace  of dimension $n/2 $ and $\mathcal{V}_i$ is an $ (n-2n/ \ord(\sigma^{i}))$-subspace of dimension $n/2$. 
\end{coro}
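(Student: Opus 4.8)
The plan is to descend to the relative extension $L/L_i$, apply Theorem \ref{odd dcmposn for any field} there, and then transport the resulting decomposition back to $A^i$ through the $K$-isomorphism $A^i \cong A^{\sim 1}$ recorded just before the statement.

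First I would set $d := \ord(\sigma^i)$ and observe that the hypotheses $\ord(\sigma^i) \equiv 2~(\mo 4)$ and $\ord(\sigma^i) \neq 2$ force $d = 2k$ with $k$ odd and $k \ge 3$. The extension $L/L_i$ is cyclic of degree $[L:L_i] = \ord(\sigma^i) = d = 2k$, with Galois group $\langle \sigma^i \rangle$ generated by $\sigma^i$; since $k$ is odd, Theorem \ref{odd dcmposn for any field} applies to it verbatim (base field $L_i$, generator $\sigma^i$) and gives $A^{\sim 1} = \mathcal{U} \oplus \mathcal{V}$, where $A^{\sim 1} = \{\phi^{\sim}_{b,\sigma^i} : b \in L\} \le \sym_{L_i}(L)$ is precisely the ``$A^1$'' of that extension, $\mathcal{U}$ is a $d$-subspace of $L_i$-dimension $k$ (all its nonzero forms non-degenerate over $L_i$), and $\mathcal{V}$ is a $(d-2)$-subspace of $L_i$-dimension $k$.

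Next I would push this through the $K$-isomorphism $A^i \cong A^{\sim 1}$, which carries the direct sum $\mathcal U \oplus \mathcal V$ to a direct sum $A^i = \mathcal U_i \oplus \mathcal V_i$ of $K$-subspaces. The dimension bookkeeping is immediate from the tower law: since $[L_i:K] = n/d = n/(2k)$, each summand has $K$-dimension $k \cdot n/(2k) = n/2$. For the rank statements I would invoke Remark \ref{crspn}: the nonzero forms of $\mathcal U$ are non-degenerate over $L_i$, hence those of $\mathcal U_i$ are non-degenerate over $K$, so $\mathcal U_i$ is an $n$-subspace; dually every nonzero form of $\mathcal V_i$ is degenerate. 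Its actual $K$-rank is then pinned down by Lemma \ref{degeneracy condition} applied to the element $\sigma^i$: a degenerate $\phi_{b,\sigma^i}$ has rank $n - n/[L:L^{\langle\sigma^{2i}\rangle}]$, and since $d$ is even $\ord(\sigma^{2i}) = d/2 = k$, so this common rank is $n - n/k = n - 2n/d = n - 2n/\ord(\sigma^i)$, identifying $\mathcal V_i$ as an $(n - 2n/\ord(\sigma^i))$-subspace.

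The only genuine content, and the step I would treat most carefully, is the reduction to $L/L_i$: one must recognize $A^{\sim 1}$ as the object to which Theorem \ref{odd dcmposn for any field} applies and verify that the non-degenerate/degenerate dichotomy survives the change of base field, which is exactly what Remark \ref{crspn} supplies. Note that Remark \ref{crspn} transfers only this dichotomy and not the numerical rank (the $L_i$-rank $d-2$ and the $K$-rank $n-2n/d$ differ because the ambient dimension changes), which is why the $K$-rank must be recovered separately from Lemma \ref{degeneracy condition}. The remaining ingredients — the parity observation converting $d \equiv 2~(\mo 4)$ into ``$k$ odd'', the tower law in the dimension count, and the order computation $\ord(\sigma^{2i}) = k$ — are routine.
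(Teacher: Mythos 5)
Your proposal is correct and follows essentially the same route as the paper: apply Theorem \ref{odd dcmposn for any field} to the relative extension $L/L_i$, transport the decomposition through the isomorphism of Remark \ref{crspn}, and recover the $K$-rank of the degenerate forms separately (the paper cites Remark \ref{rank for even}, which is just the consequence of Lemma \ref{degeneracy condition} you invoke directly). Your write-up is in fact a careful expansion of the paper's one-line proof, including the useful observation that Remark \ref{crspn} transfers only the degenerate/non-degenerate dichotomy and not the numerical rank.
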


\begin{proof}
     This follows from Theorem A, noting Remark \ref{crspn} and the fact (Remark \ref{rank for even}) that a skew form in $A^i$ is either non-degenerate or has rank equal to  $n - 2n/\ord(\sigma^i)$.
\end{proof}

Consequently we obtain the following. 

\begin{coro} \label{odd dcmpsn fr any field}
     Let $K$ be a field and $n = 2k$, where  $ k\geq 1$ is odd.  Let $L$ be any cyclic Galois extension of $K$ of degree $n$ with Galois group $G = \langle \sigma \rangle$. Then
     \begin{equation}
  \alt_K(L) =  B^1 \bigoplus  A^0 \bigoplus\left(\bigoplus_{\substack{ \ord(\sigma^{i}) ~\equiv~ 0~ (\mo 2)\\\ord(\sigma^{i})\neq 2}}\left(\mathcal{U}_i \bigoplus \mathcal{V}_i \right)\right) \bigoplus  \left( \bigoplus_{\ord(\sigma^{i}) ~\equiv~ 1~ (\mo 2)} A^i \right),   
\end{equation} 
where $ B^{1}$, $A^0$, $ A^i$ (for $ \ord(\sigma^{i}) ~\equiv~ 1~ (\mo 2)$) and $\mathcal{U}_{i}$ are $n$-subspace of dimension $n/2$, $n,$ $n$ and $n/2$   respectively and $\mathcal{V}_i$ is an $ (n-2n/ \ord(\sigma^{i}))$-subspace of dimension $n/2$.
\end{coro}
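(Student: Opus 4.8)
The plan is to assemble the asserted decomposition from results already in hand, the only genuinely new ingredient being an elementary number-theoretic observation about the orders of powers of $\sigma$. I would start from the decomposition recorded in Remark \ref{cyclic even decomposition},
\[ \alt_K(L) = B^1 \oplus A^0 \oplus A^1 \oplus A^2 \oplus \cdots \oplus A^m, \]
which holds for every cyclic extension of even degree $n = 2r$. Here $A^j = A^{\sigma^j}$, the indices $j = 1, \ldots, m$ (with $m = r-1$) run over representatives of the inverse-pairs $\{\sigma^j, \sigma^{-j}\}$ among the non-trivial non-involutory elements (recall $A^{\sigma} = A^{\sigma^{-1}}$), $A^0 = A^{\mathrm{id}}$, and $B^1 = A^{\tau_1}$ for the unique involution $\tau_1 = \sigma^{r}$.

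Next I would sort the summands $A^j$ by the parity of $\ord(\sigma^j)$. When $\ord(\sigma^j)$ is odd, Lemma \ref{rank for odd} together with Theorem \ref{dimension for A} shows that $A^j$ is already an $n$-subspace of dimension $n$, so it is retained unchanged; the component $A^0$ is likewise an $n$-subspace of dimension $n$ by the Observation following Lemma \ref{rank for odd}, and $B^1$ is an $n$-subspace of dimension $n/2$ by Theorem \ref{dimension for A}. When $\ord(\sigma^j)$ is even and $\ne 2$, I would invoke Corollary \ref{dcompsn-Ai-for-odd} to split $A^j = \mathcal{U}_j \oplus \mathcal{V}_j$, with $\mathcal{U}_j$ an $n$-subspace of dimension $n/2$ and $\mathcal{V}_j$ an $(n - 2n/\ord(\sigma^j))$-subspace of dimension $n/2$.

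The single point needing care, and precisely where the hypothesis that $k$ is odd is used, is that Corollary \ref{dcompsn-Ai-for-odd} is phrased under the assumption $\ord(\sigma^i) \equiv 2~(\mo 4)$, whereas the present statement indexes over all $\sigma^i$ of even order. I would close this gap by observing that $n = 2k$ with $k$ odd forces $n \equiv 2~(\mo 4)$; hence any even divisor $d$ of $n$ has the form $d = 2d'$ with $d' \mid k$ odd, so $d \equiv 2~(\mo 4)$. Since $\ord(\sigma^i) = n/\gc(n,i)$ divides $n$, every element of even order automatically has order $\equiv 2~(\mo 4)$, so the two index conditions ``$\ord(\sigma^i) \equiv 0~(\mo 2)$ and $\ord(\sigma^i) \ne 2$'' and ``$\ord(\sigma^i) \equiv 2~(\mo 4)$ and $\ord(\sigma^i) \ne 2$'' coincide, and Corollary \ref{dcompsn-Ai-for-odd} indeed applies to every even-order summand.

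Finally I would substitute $A^j = \mathcal{U}_j \oplus \mathcal{V}_j$ for the even-order indices while leaving $B^1$, $A^0$, and the odd-order $A^j$ untouched, which produces the displayed decomposition with all stated ranks and dimensions. As a consistency check, the total dimension is $\tfrac{n}{2} + n + (r-1)n = \tfrac{n(n+1)}{2} = \dim \sym_K(L)$, as it must be. I anticipate no serious obstacle: the argument is essentially bookkeeping, and the only subtle step is the congruence computation identifying the two index sets, which rests entirely on $k$ being odd.
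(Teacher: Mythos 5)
Your proposal is correct and follows essentially the same route as the paper: start from the decomposition of Remark \ref{cyclic even decomposition}, keep $B^1$, $A^0$ and the odd-order $A^i$ (via Lemma \ref{rank for odd} and Theorem \ref{dimension for A}), and split each even-order $A^i$ using Corollary \ref{dcompsn-Ai-for-odd}. The one point you spell out that the paper leaves implicit — that $n \equiv 2 \pmod 4$ forces every even-order power of $\sigma$ to have order $\equiv 2 \pmod 4$, so Corollary \ref{dcompsn-Ai-for-odd} really covers all even-order summands — is exactly the right justification and strengthens the exposition.
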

\begin{proof}
In light of Remark \ref{cyclic even decomposition}, the decomposition \eqref{ dcmpstn for cyclic} becomes
 \begin{equation*}
 \alt_K(L) =  B^1 \bigoplus  A^0 \bigoplus \left(\bigoplus_{\substack{ \ord(\sigma^{i}) ~\equiv~ 0~ (\mo 2)\\\ord(\sigma^{i})\neq 2}}A^i\right) \bigoplus \left( \bigoplus_{\ord(\sigma^{i}) ~\equiv~ 1~ (\mo 2)} A^i \right).
   \end{equation*}
 
   Now it is  clear in view of Corollary \ref{dcompsn-Ai-for-odd} and Lemma  \ref{rank for odd}.
\end{proof}

In view of Theorem \ref{odd dcmposn for any field} in following theorems we focus on the case where $ n=2r$ is divisible by $ 4 $. Since in this case  $r$ is even thus by Proposition \ref{GMdegen-crit} it follows that $ \phi_{b,\sigma}$ is degenerate if and only if   \begin{equation*}
 N_{L/L_{2}} (\sigma(b)/b) = 1, 
\end{equation*}
that is,  $ 
N_{L/L_{2}} (b) $ 
is an  eigenvector of $\sigma$ corresponds to  eigenvalue $1$. This degeneracy criterion for $\phi_{b,\sigma}$ is identical to the degeneracy criterion for the skew form $f_{b,\sigma}$ (see \emph{\cite[Proposition 3.1]
{GM2023}}). 
 \begin{prop}\emph{(\cite[Lemma 3.1]{GM2023})}\label{Egnspcdecomp} 
     Let $n= 2^{\alpha}k$ where $ \alpha \geq 2$ and $k$ is odd.  Suppose that $L$ is a cyclic  extension of a field $K$ of degree $n$ with Galois group $\gal(L/K) = \langle \sigma \rangle$. Then the following hold.
     \begin{enumerate}
\item[(i)] For $1 \leq i \leq \alpha - 1 $ the subspace $ E_i:= \{b \in L : \sigma^{n/2^i}(b)= -b\} \le L$ has dimension $n/2^i$.

 \item[(ii)] Let $V_1 :=\{b \in L : \sigma^{k}(b) =  b\}$ and $V_2 :=\{b \in L : \sigma^{k}(b) =  -b\}$. Then $\dim(V_1) = \dim (V_2)= k$.
 \end{enumerate}
 \end{prop}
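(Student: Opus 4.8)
The plan is to handle both parts by a single uniform device: each of the eigenspaces in the statement is the $(-1)$-eigenspace of an appropriate power $\tau = \sigma^{n/2^{i}}$ of $\sigma$, and although $\tau$ itself has order $2^{i}$, it restricts to an \emph{involution} on the intermediate field $L^{\langle \tau^{2}\rangle}$. On that field the $(\pm 1)$-eigenspace splitting is available (here is where $\mathrm{char}\,K \neq 2$ enters), and the dimensions of the pieces are then read off directly from the fundamental theorem of Galois theory as indices of subgroups of the cyclic group $G$.

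For part (i), fix $i$ with $1 \le i \le \alpha-1$ and set $\tau := \sigma^{n/2^{i}}$. Since $n/2^{i} = 2^{\alpha-i}k$ divides $n$, I would first record that $\ord(\tau) = 2^{i}$, so that $\tau^{2} = \sigma^{n/2^{i-1}}$ has order $2^{i-1}$. Next I would observe that $E_i = \{b : \tau(b) = -b\}$ is precisely the $(-1)$-eigenspace of the $K$-linear operator $\tau$ on $L$, and that it sits inside $W := L^{\langle \tau^{2}\rangle} = L_{n/2^{i-1}}$, because $\tau(b) = -b$ forces $\tau^{2}(b) = b$. On $W$ we have $\tau^{2} = \mathrm{id}$; as $x^{2}-1=(x-1)(x+1)$ is separable in characteristic $\neq 2$, the restriction $\tau|_{W}$ is diagonalizable, giving $W = W_{+}\oplus W_{-}$ into its $(\pm1)$-eigenspaces, with $W_{-} = E_i$.

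The computation then finishes with two index counts. From $\langle \tau^{2}\rangle \le \langle \tau\rangle$ one gets $L^{\langle \tau\rangle}\subseteq L^{\langle \tau^{2}\rangle}=W$, so the $(+1)$-eigenspace is $W_{+} = \{b\in W:\tau(b)=b\} = L^{\langle\tau\rangle} = L_{n/2^{i}}$, of dimension $[G:\langle\tau\rangle]=n/2^{i}$, while $\dim_K W = [G:\langle\tau^{2}\rangle]=n/2^{i-1}$. Subtracting gives
\[ \dim_K E_i = \frac{n}{2^{i-1}}-\frac{n}{2^{i}} = \frac{n}{2^{i}}. \]
For part (ii) I would run the very same argument with $\rho := \sigma^{k} = \sigma^{n/2^{\alpha}}$ (the ``$i=\alpha$'' instance): $V_1 = L^{\langle\sigma^{k}\rangle}=L_k$ is just the fixed field, of dimension $[G:\langle\sigma^k\rangle]=n/2^{\alpha}=k$, and $V_2$ is the $(-1)$-eigenspace of $\rho$, which by the involution count on $W' = L^{\langle\sigma^{2k}\rangle}$ (where $\dim_K W' = n/2^{\alpha-1}=2k$ and $W'_{+}=V_1$ has dimension $k$) has dimension $2k-k=k$. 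The one genuinely load-bearing step — the crux — is the diagonalization on $W$: it is exactly there that $\mathrm{char}\,K\neq 2$ is indispensable, and one must be careful to match $W_{+}$ with the fixed field $L^{\langle\tau\rangle}$ through the inclusion $L^{\langle\tau\rangle}\subseteq L^{\langle\tau^{2}\rangle}$. Everything else is routine bookkeeping with subgroup indices in the cyclic group $G$.
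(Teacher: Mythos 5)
Your proof is correct, and there is in fact nothing in this paper to compare it against: the proposition is imported verbatim from \cite[Lemma 3.1]{GM2023} and no proof is reproduced here, so your argument has to stand on its own --- which it does. The reduction of $E_i$ to the $(-1)$-eigenspace of the involution $\tau|_W$ on $W = L^{\langle \tau^2\rangle}$ is sound (one small point worth making explicit: $W$ is $\tau$-stable because $G$ is abelian, so the restriction $\tau|_W$ makes sense), the identification $W_+ = L^{\langle\tau\rangle}$ through the inclusion $L^{\langle\tau\rangle}\subseteq L^{\langle\tau^2\rangle}$ is right, and the counts $\dim_K W = n/2^{i-1}$ and $\dim_K W_+ = n/2^{i}$ are exactly what the Galois correspondence gives; part (ii) is, as you say, the same computation run for $\rho = \sigma^k$. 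The natural alternative route --- and very likely the one used in \cite{GM2023} --- is to invoke the normal basis theorem to identify $L$ with $K[x]/(x^n-1)$ as a $K[\sigma]$-module, so that for $m = n/2^i$ the space $E_i = \ker(\sigma^m + 1)$ has dimension $\deg \gcd(x^m+1,\, x^n-1) = m$, using $2m \mid n$ and $\mathrm{char}\,K \neq 2$. Your approach buys economy of means: it needs only the Galois correspondence plus the $\pm 1$-splitting of an involution, with no appeal to the normal basis theorem; the module-theoretic route costs that heavier input but generalizes immediately to eigenspaces of arbitrary powers $\sigma^j$ and to eigenvalues other than $-1$ (roots of unity lying in $K$), which is relevant elsewhere in this circle of results.
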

 Let  $\mathcal{E}_i$ be the subspace of $A^1$ corresponding to $E_i :=  \{b \in L : \sigma^{n/2^i}(b)= -b\}$ under the isomorphism of Theorem \ref{dimension for A}, that is, $\mathcal{E}_i= \{ \phi_{b,\sigma}: b \in E_i\}$ ( $1 \leq i \leq \alpha -1$ ).  Similarly, let $\mathcal V_j$ correspond to the subspace $V_j$ of $L$, that is, $ \mathcal{V}_{j}=\{ \phi_{b,\sigma}$ ($j=1,2$)\}.
\begin{thm} \emph{(\cite[Theorem B]{GM2023})} \label{ decompsn of A1 of algebraic numbere field}
  Suppose $ n = 2^{\alpha} k $ where $ \alpha \geq 2$ and $k$ is odd. Let  $ K $ be an algebraic number field such that $ -1$ is not a square in $ K$. Then there exists a cyclic  extension $L$ of $ K $ of degree $ n $ with the Galois group $G = \langle \sigma \rangle$ such that       
   \begin{equation}\label{decmpsn A1 algebraic}    
A^{1} =  \mathcal{E}_1 \oplus  \cdots  \oplus \mathcal{E}_{\alpha - 1} \oplus \mathcal{V}_1 \oplus \mathcal{V}_2 ,
  \end{equation}
   where
    \begin{itemize}
       \item[(i)] $\mathcal{E}_{i}$ is an  $n$-subspace of dimension $n/2^i$ for $ 1 \leq i \leq \alpha - 1$,
        \item[(ii)] $\mathcal{V}_j$ is an $( n-2)$-subspace of dimension $ k $ for $ 1 \leq j \leq 2$. 
    \end{itemize}
  \end{thm}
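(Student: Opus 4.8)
The plan is to reduce the entire statement to the corresponding skew-form result of \cite{GM2023} by exploiting the coincidence of degeneracy criteria recorded immediately before the statement. The decisive point is that, since $4 \mid n$ (so $r$ is even), Proposition \ref{GMdegen-crit} shows that the degeneracy criterion for the symmetric form $\phi_{b,\sigma}$, namely $N_{L/L_2}(\sigma(b)/b)=1$, is word-for-word the criterion for the skew form $f_{b,\sigma}$. Moreover, by Lemma \ref{degeneracy condition} every degenerate $\phi_{b,\sigma} \in A^1$ has rank exactly $n - n/[L:L_2] = n-2$, because $[L:L_2]=\ord(\sigma^2)=n/2$. Consequently, under the $K$-isomorphism $A^1 \cong L$ of Theorem \ref{dimension for A} (valid as $\ord(\sigma)=n\ge 4$), a $\sigma$-stable subspace $W \le L$ transports to an $n$-subspace of $A^1$ if $\phi_{b,\sigma}$ is non-degenerate for all nonzero $b\in W$, and to an $(n-2)$-subspace if $\phi_{b,\sigma}$ is degenerate for all such $b$. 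The theorem therefore amounts to finding a direct-sum decomposition of $L$ into $\sigma$-stable pieces on which the degeneracy behaviour is constant.

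I would first produce the decomposition of $L$ itself. Invoking Proposition \ref{Egnspcdecomp}, I take the subspaces $E_1,\dots,E_{\alpha-1}$ together with $V_1,V_2$, of dimensions $n/2^i$ and $k,k$ respectively. Since $\sum_{i=1}^{\alpha-1} n/2^i + 2k = n(1-2^{-(\alpha-1)}) + n\cdot 2^{-(\alpha-1)} = n$, it remains only to check directness. This follows because $E_i=\ker(\sigma^{n/2^i}+1)$, $V_1=\ker(\sigma^{k}-1)$ and $V_2=\ker(\sigma^{k}+1)$ are eigenspaces cut out by pairwise coprime factors of $x^n-1$: over $\overline{K}$ they select the $n$-th roots of unity $\zeta$ according to the $2$-adic valuation of $\ord(\zeta)$ (valuation $\alpha-i+1$ for $E_i$, valuation $1$ for $V_2$, valuation $0$ for $V_1$), and these valuations partition all $n$-th roots of unity. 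Hence $L = E_1\oplus\cdots\oplus E_{\alpha-1}\oplus V_1\oplus V_2$, and transporting through Theorem \ref{dimension for A} yields the claimed decomposition $A^1 = \mathcal E_1\oplus\cdots\oplus\mathcal E_{\alpha-1}\oplus\mathcal V_1\oplus\mathcal V_2$ with $\dim \mathcal E_i = n/2^i$ and $\dim\mathcal V_j = k$.

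The remaining task is to verify constant rank on each summand, i.e.\ to evaluate $N_{L/L_2}(\sigma(b)/b)=\sigma\!\left(N_{L/L_2}(b)\right)/N_{L/L_2}(b)$ for $b$ in each piece. For $b\in E_i$ one must show this norm is $\neq 1$ (making $\mathcal E_i$ an $n$-subspace), whereas for $b\in V_1,V_2$ one must show it equals $1$ (making each $\mathcal V_j$ an $(n-2)$-subspace). The simplification is that the relations $\sigma^{n/2^i}(b)=-b$, respectively $\sigma^{k}(b)=\pm b$, constrain $N_{L/L_2}(b)=b\sigma^2(b)\cdots\sigma^{n-2}(b)$ strongly, letting one compare $\sigma\!\left(N_{L/L_2}(b)\right)$ with $N_{L/L_2}(b)$ by a re-pairing of the factors $\sigma^{2j}(b)$. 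These are exactly the norm computations performed for skew forms in \cite[Theorem B]{GM2023}, and by the coincidence of criteria noted above they apply verbatim here.

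The genuine obstacle is not this linear algebra but the existence claim, and this is why the hypotheses ``$K$ an algebraic number field'' and ``$-1\notin K^{2}$'' enter: one must guarantee a cyclic extension $L/K$ of degree $n$ for which the constant-degeneracy norm identities actually hold for \emph{all} $b$ in each piece, which is a global number-theoretic construction ensuring the relevant norm conditions (and using $-1\notin K^2$ to pin down the behaviour of the $V_2$-piece). I would import that construction directly from \cite[Theorem B]{GM2023} rather than reprove it, since the present theorem is precisely its symmetric-form counterpart.
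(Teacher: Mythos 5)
Your proposal is correct and takes essentially the same route as the paper: the paper presents this theorem as a direct import of \cite[Theorem B]{GM2023}, justified precisely by the observation made immediately before the statement (via Proposition \ref{GMdegen-crit}) that when $4 \mid n$ the degeneracy criterion for the symmetric form $\phi_{b,\sigma}$ coincides with that for the skew form $f_{b,\sigma}$, so the skew-form decomposition, its constant-rank norm computations, and the number-theoretic existence construction all transfer verbatim. Your extra verifications (the dimension count, directness via the eigenvalue partition, and the rank-$(n-2)$ computation from Lemma \ref{degeneracy condition}) merely make explicit details the paper leaves implicit in the citation.
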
 
 \begin{coro}\label{decmpsn Ai algebraic} \emph{(\cite[Corollary 4.3]{GM2023})}
In the situation of Theorem \ref{ decompsn of A1 of algebraic numbere field} if $ \ord(\sigma^{i}) \equiv 0~(\mo 4)$, say $\ord(\sigma^{i})= 2^{\beta}k' \ (\beta \ge 2)$ then   
   \begin{equation}
A^{i} = \mathcal{V}_1^i \oplus \mathcal{V}_2^i \oplus \mathcal{E}_1^i \oplus \cdots \mathcal{E}_{\beta - 1}^i,
   \end{equation}
    where
    \begin{itemize}
        \item[(i)] $\mathcal{E}_{k}^{i}$ is an  $n$-subspace of dimension $n/2^i$ for   $  1 \leq k \leq \beta - 1$,
        \item
        [(ii)]  $\mathcal{V}_j^i$ is an $(n-2)$-subspace of dimension $ k'n/{\ord(\sigma^{i})} $ for  $  1 \leq j \leq 2$.
        \end{itemize}
\end{coro}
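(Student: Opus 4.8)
The plan is to reduce the statement to an application of Theorem \ref{ decompsn of A1 of algebraic numbere field} over the intermediate field $L_i = L^{\langle \sigma^i \rangle}$. The extension $L/L_i$ is cyclic with group $\gal(L/L_i) = \langle \sigma^i \rangle$ of order $m := \ord(\sigma^i) = 2^{\beta}k'$, and since $\beta \ge 2$ we have $4 \mid m$. The crucial device is the $K$-isomorphism $A^i \cong A^{\sim 1}$ constructed just before Remark \ref{crspn}, where $A^{\sim 1} = \{\phi^{\sim}_{b,\sigma^i} : b \in L\}$ is precisely the analogue of ``$A^1$'' for the cyclic extension $L/L_i$ (with $\sigma^i$ now the distinguished generator of the Galois group). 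Thus it suffices to decompose $A^{\sim 1}$ as a space of symmetric $L_i$-bilinear forms and then transport the decomposition back to $A^i$ along this isomorphism, since a $K$-isomorphism carries a direct-sum decomposition to a direct-sum decomposition.

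First I would verify that the hypotheses of Theorem \ref{ decompsn of A1 of algebraic numbere field} hold for $L/L_i$. The field $L_i$ is a finite extension of the number field $K$, hence itself an algebraic number field; and since $L_i \subseteq L$ and $-1$ is not a square in $L$ (the governing condition underlying the construction of Theorem \ref{ decompsn of A1 of algebraic numbere field}, in line with the universal validity of the decomposition when $-1 \notin L^2$), $-1$ is not a square in $L_i$ either. Applying Theorem \ref{ decompsn of A1 of algebraic numbere field} to $L/L_i$ under the parameter substitution $n \mapsto m$, $\alpha \mapsto \beta$, $k \mapsto k'$, $\sigma \mapsto \sigma^i$ then yields
\[ A^{\sim 1} = \mathcal{E}_1^{\sim} \oplus \cdots \oplus \mathcal{E}_{\beta - 1}^{\sim} \oplus \mathcal{V}_1^{\sim} \oplus \mathcal{V}_2^{\sim}, \]
where $\mathcal{E}_j^{\sim}$ corresponds to the eigenspace $\{b \in L : \sigma^{im/2^j}(b) = -b\}$, of $L_i$-dimension $m/2^j$ by Proposition \ref{Egnspcdecomp}(i), and $\mathcal{V}_1^{\sim}, \mathcal{V}_2^{\sim}$ correspond to the two $\sigma^{ik'}$-eigenspaces, each of $L_i$-dimension $k'$ by Proposition \ref{Egnspcdecomp}(ii).

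Transporting along $A^i \cong A^{\sim 1}$ produces the $K$-subspaces $\mathcal{E}_j^i$ and $\mathcal{V}_j^i$ of $A^i$ together with the asserted direct-sum decomposition. For the dimensions I would use $[L_i : K] = n/m$, which converts the $L_i$-dimensions above into $K$-dimensions $\dim_K \mathcal{E}_j^i = (n/m)(m/2^j) = n/2^j$ and $\dim_K \mathcal{V}_j^i = (n/m)k' = k'n/\ord(\sigma^i)$. The rank statements then follow from Remark \ref{crspn}: a form in $A^{\sim 1}$ is non-degenerate over $L_i$ exactly when the corresponding form in $A^i$ is non-degenerate over $K$, so each $\mathcal{E}_j^i$ (arising from a subspace of non-degenerate $L_i$-forms) is an $n$-subspace, while every form in $\mathcal{V}_j^i$ is degenerate. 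Applying Lemma \ref{degeneracy condition} with $\sigma^i$ in place of $\sigma$ finally shows that each such degenerate form has the single constant rank $n - 2n/\ord(\sigma^i)$, so $\mathcal{V}_1^i$ and $\mathcal{V}_2^i$ are constant-rank subspaces.

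I expect the main obstacle to be the careful transfer of the hypotheses and eigenspace data of Theorem \ref{ decompsn of A1 of algebraic numbere field} and Proposition \ref{Egnspcdecomp} from $L/K$ to the subextension $L/L_i$: one must confirm that $-1 \notin L_i^2$ and that the eigenspace-dimension count goes through verbatim with the parameters $(\beta, k', m)$ attached to $\langle\sigma^i\rangle = \gal(L/L_i)$. A secondary subtlety is the rank bookkeeping. Remark \ref{crspn} preserves only the degenerate/non-degenerate dichotomy, not the numerical rank; over $L_i$ the subspaces $\mathcal{V}_j^{\sim}$ are $(m-2)$-subspaces, so the precise $K$-rank $n - 2n/\ord(\sigma^i)$ of each $\mathcal{V}_j^i$ must be recovered directly from the degeneracy criterion of Lemma \ref{degeneracy condition} (equivalently Remark \ref{rank for even}) rather than simply read off from the $L_i$-rank.
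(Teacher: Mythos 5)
Your proposal is correct and is essentially the paper's own route: the paper imports this corollary from \cite{GM2023} without writing out a proof, but its proof of the parallel Corollary \ref{dcompsn-Ai-for-odd} is exactly your argument---apply the base decomposition theorem to the cyclic extension $L/L_i$, transport it along the $K$-isomorphism $A^i \cong A^{\sim 1}$ of Remark \ref{crspn} (with applicability to $L/L_i$ secured by Remark \ref{theorem B rema}, since the construction gives $-1 \notin L^2$), convert $L_i$-dimensions to $K$-dimensions via $[L_i:K]=n/m$, and pin down ranks from Lemma \ref{degeneracy condition} rather than from the $L_i$-rank. One point in your favour: the rank $n - 2n/\ord(\sigma^i)$ you compute for $\mathcal{V}_j^i$ is the correct value, and the ``$(n-2)$-subspace'' in the printed statement (likewise ``dimension $n/2^i$'' in (i), which should read $n/2^k$) is a typo propagated from the case $i=1$, as comparison with Corollary \ref{dcompsn-Ai-for-odd} confirms.
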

\begin{coro}\label{algebraic dcmpsn Alt L}
                     In the situation of Theorem \ref{ decompsn of A1 of algebraic numbere field} there is direct-decomposition
                     \begin{equation}
                      \begin{aligned}[b]
            \alt_K(L) = &  B^1 \bigoplus A^0 \bigoplus \left(\bigoplus_{\substack{ \ord(\sigma^{i}) ~\equiv~ 2~ (\mo 4)\\\ord(\sigma^{i})\neq 2}} \left(\mathcal{U}_i \bigoplus \mathcal{V}_i \right)\right) \bigoplus \left( \bigoplus_{\ord(\sigma^{i}) ~\equiv~ 1~ (\mo 2)} A^i \right)  \\ &\bigoplus_{ \ord(\sigma^{i}) ~\equiv~ 0~ (\mo 4)}\left(\mathcal{V}_1^i \bigoplus \mathcal{V}_2^i \bigoplus \mathcal{E}_{\beta - 1}^i \bigoplus \cdots \bigoplus\mathcal{E}_1^i\right) 
            \end{aligned}
            \end{equation}
                 \end{coro}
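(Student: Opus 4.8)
The plan is to obtain the asserted decomposition by refining, summand by summand, the coarse decomposition of $\alt_K(L)$ already recorded in Remark \ref{cyclic even decomposition}. That remark gives the direct sum \eqref{ dcmpstn for cyclic},
\[ \alt_K(L) = B^1 \oplus A^0 \oplus A^1 \oplus A^2 \oplus \cdots \oplus A^m, \]
and the present corollary is nothing more than the result of replacing each intermediate block $A^i$ ($1 \le i \le m$) by its finer decomposition, the form of which is dictated solely by $\ord(\sigma^i)$. Because a direct sum whose blocks are themselves expressed as internal direct sums remains a direct sum once the inner pieces are collected, I would not need to re-verify directness at any stage: it is inherited from \eqref{ dcmpstn for cyclic} together with the directness asserted in each refinement I invoke.

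First I would fix the combinatorics of the index set. Since $G = \langle \sigma \rangle$ is cyclic of even order $n = 2^{\alpha}k$ with $\alpha \ge 2$, its unique involution is $\sigma^{n/2}$, and this is precisely the block $B^1$ that has already been separated off in \eqref{ dcmpstn for cyclic}. Hence for every $i \in \{1,\ldots,m\}$ one has $\ord(\sigma^i) \ne 2$, so each such index falls into exactly one of the three mutually exclusive and exhaustive classes $\ord(\sigma^i) \equiv 1 \pmod 2$, or $\ord(\sigma^i) \equiv 2 \pmod 4$ with $\ord(\sigma^i) \ne 2$, or $\ord(\sigma^i) \equiv 0 \pmod 4$. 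This sorts the blocks $A^1,\ldots,A^m$ into three families.

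Next I would substitute the appropriate already-proved refinement on each family. When $\ord(\sigma^i)$ is odd, Lemma \ref{rank for odd} shows $A^i$ is itself an $n$-subspace of dimension $n$, so it is carried over unchanged. When $\ord(\sigma^i) \equiv 2 \pmod 4$ and $\ord(\sigma^i) \ne 2$, Corollary \ref{dcompsn-Ai-for-odd} gives $A^i = \mathcal{U}_i \oplus \mathcal{V}_i$ with the stated ranks and dimensions. When $\ord(\sigma^i) \equiv 0 \pmod 4$, say $\ord(\sigma^i) = 2^{\beta}k'$ with $\beta \ge 2$, Corollary \ref{decmpsn Ai algebraic} gives $A^i = \mathcal{V}_1^i \oplus \mathcal{V}_2^i \oplus \mathcal{E}_1^i \oplus \cdots \oplus \mathcal{E}_{\beta-1}^i$. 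Inserting these three refinements into \eqref{ dcmpstn for cyclic} and grouping by family produces exactly the displayed decomposition.

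The one substantive point, and the main obstacle, is to confirm that the hypotheses of Corollary \ref{decmpsn Ai algebraic} are genuinely in force for the blocks with $\ord(\sigma^i) \equiv 0 \pmod 4$. That corollary holds ``in the situation of Theorem \ref{ decompsn of A1 of algebraic numbere field}'', i.e.\ it presupposes that $K$ is an algebraic number field with $-1 \notin K^2$ and that $L$ is the specific cyclic extension of degree $n$ produced by that theorem; these are exactly the standing hypotheses of the present corollary, so they transfer verbatim. It is worth stressing that this is where the extra arithmetic input is really used: the condition $\alpha \ge 2$ is what allows the family with $\ord(\sigma^i) \equiv 0 \pmod 4$ to be nonempty, in contrast to the $n = 2k$ ($k$ odd) case of Corollary \ref{odd dcmpsn fr any field}, where that family is empty and the weaker ``any field'' hypothesis suffices. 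I would close with a dimension tally, checking that the pieces $B^1, A^0, \mathcal{U}_i, \mathcal{V}_i, A^i, \mathcal{V}_1^i, \mathcal{V}_2^i, \mathcal{E}_j^i$ have dimensions summing to $n(n+1)/2 = \dim \sym_K(L)$, which certifies that the refinement has neither dropped nor duplicated any block.
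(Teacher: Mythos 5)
Your proposal is correct and follows essentially the same route as the paper, whose own proof is a one-line appeal to exactly the ingredients you use: the cyclic decomposition \eqref{ dcmpstn for cyclic}, Lemma \ref{rank for odd} for the odd-order blocks, Corollary \ref{dcompsn-Ai-for-odd} for the blocks with $\ord(\sigma^{i})\equiv 2 \ (\mo 4)$, and Corollary \ref{decmpsn Ai algebraic} for those with $\ord(\sigma^{i})\equiv 0 \ (\mo 4)$ (the paper's proof mis-cites the corollary being proved where it plainly intends Corollary \ref{decmpsn Ai algebraic}, a slip you implicitly correct). Your added checks --- the trichotomy on $\ord(\sigma^{i})$ after removing the unique involution $\sigma^{n/2}$, the transfer of the hypotheses of Theorem \ref{ decompsn of A1 of algebraic numbere field}, and the dimension count $n(n+1)/2$ --- merely make explicit what the paper leaves implicit.
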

                 \begin{proof}
                 
                    Using Corollaries \ref{dcompsn-Ai-for-odd}, \ref{algebraic dcmpsn Alt L} and Lemma   \ref{rank for odd}      
 as well as  the decomposition \ref{ dcmpstn for cyclic}, we can deduce the  required decomposition. 
                 \end{proof}
                 \begin{rema} \label{n-subs-exists-in}
     Let $n= 2^{\alpha}k$ where $ \alpha \geq 1$ and $k$ is odd.  Suppose that $L$ is a cyclic  extension of a field $K$ of degree $n$ with Galois group $\gal(L/K) = \langle \sigma \rangle$. If $\ord(\sigma^i)$ is even then there always exists an $n$-subspace of dimension $n/2$ inside $A^i$.
    If $ \alpha = 1$ this follows from Corollary \ref{odd dcmpsn fr any field}.
     Otherwise if $ \alpha > 1$ then
     it follows from Theorem \ref{ decompsn of A1 of algebraic numbere field} that $\mathcal{E}_1: =\{f_{b,\sigma}: b\in E_1\} $ is the desired subspace for $A^1$.  The corresponding assertion for $A^i$ now follows in the light of Corollary \ref{algebraic dcmpsn Alt L}.
 \end{rema}
                 \begin{rema}  \emph{(\cite[Remark 4.2]{GM2023})}\label{theorem B rema}
                     As its proof shows, Theorem \ref{ decompsn of A1 of algebraic numbere field} as well as its corollaries  remain valid for an arbitrary cyclic extension $L/K$ of degree $n = 2^{\alpha}k$ ($\alpha \ge 2$) such that $-1$ is not a square in $L$. Similarly, let $K$ be a field such that $f(X): = X^{4} + 1 $ is irreducible in $K[X]$ (it is not difficult to show that $K$ has this property if and only if none of $ -1 ,2$ and $-2$ is a square in $K$). Then Theorem $B$ holds true for any cyclic extension $L/K$ of degree $n = 2^\alpha k$. Indeed, if $\eta_i$ is a $2^i$-root of unity for $i \ge 1$ then the conditions $-1 \not \in K^2$ and  $\sigma(\eta_i) = -\eta_i^{-1}$ mean that $\eta_i \not \in \{-\pm 1, \pm i \}$, where $i$ denotes a primitive $4$-th root of unity in $L$. Thus $\eta$ must have order $2^s$ where $s \ge 3$. Since $\eta \in L_2$ this would mean that $L_2$ contains an element of order $8$ and thus a root of $f$ implying $f$ has a quadratic factor in $K[X]$. 
\end{rema}
                 \begin{thm} \emph{(\cite[Theorem C]{GM2023})}\label{th for finite field}
  Let $ K $  be a finite field with $ q$ elements such that $ -1$ is not a square in $ K$. Let  $ q + 1 = 2^a l$ (l odd) where $a \geq 1$ and  $ n = 2^{\alpha}k$ (k odd) where  $ \alpha \geq 2$. Suppose   $ L $ is a cyclic  extension of $ K $ of degree $ n $ with $\gal(L/K)=\langle\sigma_f\rangle$ where $\sigma_f$ is the  Frobenius map of $L$ defined by  $\sigma_f: b \rightarrow b^q$. 
         \begin{itemize}
             \item[(1)] 
 If $\alpha \leq a+1 $ then  
   \begin{equation}\label{ A1 finite field}    
A^{1} = \mathcal{V}_1 \oplus \mathcal{V}_2 \oplus \mathcal{E}_1 \oplus \cdots \oplus \mathcal{E}_{\alpha - 1},
   \end{equation}
    where
    \begin{itemize}
        \item[(i)] $\mathcal{E}_{i}$ is an  $n$-subspace of dimension $n/2^i$ for $ 1 \leq i \leq \alpha - 1$,
        \item[(ii)]  $\mathcal{V}_j$ is an $ (n-2)$-subspace of dimension $ k $ for $ 1 \leq j \leq 2$. 
    \end{itemize} 
    \item[(2)] If $\alpha >a+1 $ and $ l=1$, that is, $ q = 2^a - 1$, then 
   \begin{equation}\label{decmpsn finite field2}          
A^{1} = \mathcal{V}_1 \oplus \mathcal{V}_2 \oplus \mathcal{E}_1 \oplus \cdots \oplus \mathcal{E}_{\alpha - 1},
   \end{equation}
   where
    \begin{itemize}
        \item[(i)]   $\mathcal{E}_{i}$ is an  $n$-subspace of dimension $n/2^i$ for $ 1 \leq i \leq a$ and  an $ (n-2)$-subspace of dimension $n/2^i$ for $ a+1 \leq i \leq \alpha -1$,
        \item[(ii)]  $\mathcal{V}_j$ is an $( n-2)$-subspace of dimension $ k $ for $ 1 \leq j \leq 2$. 
    \end{itemize}
         \end{itemize}
  \end{thm}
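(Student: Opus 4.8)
The plan is to deduce the decomposition from the observation that, since $\alpha \ge 2$ forces $4 \mid n$ and hence $r = n/2$ even, the degeneracy criterion of Proposition \ref{GMdegen-crit} for the symmetric form $\phi_{b,\sigma}$ reads $N_{L/L_{2}}(b) \in K$ (eigenvalue $1$) --- word for word the criterion for the skew form $f_{b,\sigma}$ recorded in \cite[Proposition 3.1]{GM2023}. Combined with Lemma \ref{degeneracy condition}, which gives that a nonzero degenerate $\phi_{b,\sigma} \in A^1$ has rank exactly $n - n/[L:L_2] = n-2$, this means the rank behaviour of the symmetric forms in $A^1$ is governed by precisely the same condition that controls the skew forms. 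Consequently the entire arithmetic skeleton of \cite[Theorem C]{GM2023} transfers; I would re-derive it so as to keep the argument self-contained.

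First I would fix the ambient decomposition of $L$. By Proposition \ref{Egnspcdecomp} we have a $K$-space decomposition $L = E_1 \oplus \cdots \oplus E_{\alpha-1} \oplus V_1 \oplus V_2$, the dimensions $n/2, \ldots, n/2^{\alpha-1}, k, k$ summing to $n$. Theorem \ref{dimension for A} supplies the $K$-isomorphism $b \mapsto \phi_{b,\sigma}$ of $L$ onto $A^1$ (it is an isomorphism because $\ord(\sigma) = n \ne 2$), which carries this into the asserted $A^1 = \mathcal{E}_1 \oplus \cdots \oplus \mathcal{E}_{\alpha-1} \oplus \mathcal{V}_1 \oplus \mathcal{V}_2$ with the stated dimensions. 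It then remains only to decide, for each summand, whether its nonzero forms are non-degenerate (rank $n$) or degenerate (rank $n-2$).

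Exploiting that $\sigma = \sigma_f$ is Frobenius, so $\sigma^j(b) = b^{q^j}$, I compute $N_{L/L_2}(b) = b^{(q^n-1)/(q^2-1)}$ and hence $N_{L/L_2}(b)^{q-1} = b^{(q^n-1)/(q+1)}$. Thus $\phi_{b,\sigma}$ is degenerate if and only if $b^{(q^n-1)/(q+1)} = 1$, i.e. if and only if $\ord(b) \mid D$, where $D := (q^n-1)/(q+1)$. Since $-1 \notin K^2$ forces $q \equiv 3 \pmod 4$, one has $v_2(q-1) = 1$ and $v_2(q+1) = a$, whence the lifting-the-exponent formulas $v_2(q^j-1) = a + v_2(j)$ for $j$ even and $v_2(q^j-1) = 1$ for $j$ odd give $v_2(D) = \alpha$. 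For $0 \ne b \in E_i$ one has $b^{q^{m_i}-1} = -1$ with $m_i = n/2^i$ even, so $v_2(\ord b) = 1 + v_2(q^{m_i}-1) = 1 + a + (\alpha - i)$; comparing with $v_2(D) = \alpha$ shows $\ord b \nmid D$ (hence $\phi_{b,\sigma}$ non-degenerate) exactly when $i \le a$. This already yields case $(1)$ in full, and the $n$-subspace range $1 \le i \le a$ of case $(2)$. For $b \in V_1, V_2$ and for the range $a+1 \le i \le \alpha-1$, the $2$-part of $\ord b$ is within range and one must verify genuine divisibility $\ord b \mid D$.

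The main obstacle is exactly this last full-divisibility step, where $2$-adic information must be upgraded to divisibility $\ord b \mid D$ and the odd parts have to be controlled. Here the hypothesis $l = 1$ (equivalently $q+1 = 2^a$) in case $(2)$ is precisely what is required: it makes the odd part of $D$ equal to the odd part of $q^n-1$, so that the odd part of $\ord b$, which divides that of $q^{m_i}-1$ and hence of $q^n-1$, automatically divides the odd part of $D$; without $l=1$ the division by $q+1$ strips odd primes from $D$ and the $\mathcal{E}_i$ need not be $(n-2)$-subspaces. For $\mathcal{V}_1, \mathcal{V}_2$ the coprimality estimate $\gcd(q^k-1,\,q+1) = 2$ (valid since $k$ is odd) shows $(q^k-1)(q+1) \mid q^n-1$, whence $q^k-1 \mid D$, so all their nonzero forms are degenerate and they are always $(n-2)$-subspaces. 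Once these valuation comparisons are assembled, the two cases of the theorem follow.
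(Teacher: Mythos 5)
Your proposal is correct, and its opening move --- observing that $\alpha \ge 2$ forces $r = n/2$ to be even, so that Proposition \ref{GMdegen-crit} makes the degeneracy criterion for $\phi_{b,\sigma}$ coincide with that for the skew form $f_{b,\sigma}$ --- is exactly the transfer principle the paper itself invokes. The difference lies in what happens afterwards: the paper gives no further argument and imports the statement wholesale as \cite[Theorem C]{GM2023}, whereas you reconstruct the finite-field arithmetic behind that citation (Frobenius orders, degeneracy as $\ord(b) \mid D$ with $D = (q^n-1)/(q+1)$, the valuation computations $v_2(D) = \alpha$ and $v_2(\ord(b)) = a + \alpha - i + 1$ on $E_i$, and the role of $l=1$ in controlling odd parts), which makes the proof self-contained where the paper's is by reference; this is a genuine gain, since the reader need not consult \cite{GM2023} to see why the dichotomy between $i \le a$ and $i \ge a+1$ arises. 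One small repair is needed: your claim that $q^k - 1 \mid D$ settles both $\mathcal{V}_1$ and $\mathcal{V}_2$ is literally correct only for $V_1$. An element $0 \ne b \in V_2$ satisfies $b^{q^k-1} = -1$, so $\ord(b) \nmid q^k - 1$; instead one notes $v_2(\ord(b)) = v_2(q^k-1) + 1 = 2 \le \alpha = v_2(D)$ while the odd part of $\ord(b)$ divides that of $q^k - 1$, hence that of $D$, giving $\ord(b) \mid D$ --- a two-line fix entirely within the valuation framework you have already set up.
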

  \begin{rema}\emph{(\cite[Remark 5.1]{GM2023})}
        In Theorem \ref{th for finite field} when $ \alpha > a+1$ and $ l > 1$ then $ \mathcal{E}_i$ is neither an $ n$-subspace nor an $( n-2)$-subspace for $a+1 \leq i \leq \alpha -1$. 
               \end{rema}
  \begin{thm}\emph{(\cite[Theorem D]{GM2023})}
   Let $p$ be a prime and $K = \mathbb{Q}_p$ be the $p$-adic  completion of $ \mathbb{Q}$ such that $ -1$ is not a square in $ K$. Let  $ p + 1 = 2^a l$ (l odd) where $a \geq 1$ and  $ n = 2^{\alpha}k$ (k odd) where  $ 2 \leq \alpha \leq a+1$. Then there exists a cyclic  extension   $ L $ of  $ K $ of degree $ n $ such that the decomposition \eqref{ A1 finite field}    holds.  
  \end{thm}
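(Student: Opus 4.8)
The plan is to take for $L$ the unramified extension of $K=\mathbb{Q}_p$ of degree $n$ and to transport to this local setting the finite-field computation behind Theorem \ref{th for finite field}. Such an $L$ exists, is unique, and is cyclic with $\gal(L/K)=\langle\sigma\rangle$, where $\sigma$ is the Frobenius substitution; its residue field is $\mathbb{F}_{p^n}$ while that of $K$ is $\mathbb{F}_p$, and reduction modulo the maximal ideal yields a canonical isomorphism $\gal(L/K)\xrightarrow{\sim}\gal(\mathbb{F}_{p^n}/\mathbb{F}_p)$ sending $\sigma$ to the Frobenius $x\mapsto x^p$. Since the residue field of $K$ has exactly $p$ elements and $p+1=2^a l$, the arithmetic hypotheses are exactly those of Theorem \ref{th for finite field}(1) with the role of $q$ played by $p$, and the constraint $2\le\alpha\le a+1$ places us in the regime where that theorem predicts every $\mathcal{E}_i$ to be an $n$-subspace.

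First I would set up the dictionary between the multiplicative arithmetic of $L$ and that of $\mathbb{F}_{p^n}$. The Teichm\"uller lift embeds $\mathbb{F}_{p^n}^\times$ as the group $\mu\subset L^\times$ of roots of unity of order prime to $p$, and this embedding is $\langle\sigma\rangle$-equivariant because $\sigma$ acts on $\mu$ as the $p$-power map, exactly as Frobenius acts on $\mathbb{F}_{p^n}^\times$. Consequently the intermediate-field norms $N_{L/L_2}$ (and more generally $N_{L/L_j}$) restrict on $\mu$ to the corresponding norms $N_{\mathbb{F}_{p^n}/\mathbb{F}_{p^2}}$ on $\mathbb{F}_{p^n}^\times$, and reduction is injective on $\mu$. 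Because $p$ is odd, every $2$-power root of unity of $L$ already lies in $\mu$, so all questions about $2$-power torsion in $L^\times$ coincide verbatim with the corresponding questions in $\mathbb{F}_{p^n}^\times$.

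Next I would run the eigenspace analysis. Proposition \ref{Egnspcdecomp}, valid for any cyclic extension of degree $n=2^\alpha k$ with $\alpha\ge 2$, gives the additive decomposition $L=E_1\oplus\cdots\oplus E_{\alpha-1}\oplus V_1\oplus V_2$ with the stated dimensions, whence under the isomorphism $A^1\cong L$ of Theorem \ref{dimension for A} one obtains $A^1=\mathcal{E}_1\oplus\cdots\oplus\mathcal{E}_{\alpha-1}\oplus\mathcal{V}_1\oplus\mathcal{V}_2$. To pin down the rank type of each summand I would invoke Proposition \ref{GMdegen-crit}: since $\alpha\ge 2$ forces $r=n/2$ to be even, $\phi_{b,\sigma}$ is degenerate precisely when $N_{L/L_2}(b)\in K$, equivalently when $N_{L/L_2}(\sigma(b)/b)=1$. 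For $b\in E_i$ the relation $\sigma^{n/2^i}(b)=-b$ collapses this norm to a $2^i$-th power of an explicit product of conjugates of $b$, so that degeneracy amounts to that product being a $2^i$-th root of unity; being a root of unity it must lie in $\mu$, and the resulting condition is governed by the same comparison of $i$ with the exponent $a=v_2(p+1)$ as in the finite field. The inequality $\alpha\le a+1$ is exactly what prevents any nonzero $b\in E_i$ (for $1\le i\le\alpha-1$) from meeting this condition, so each $\mathcal{E}_i$ is an $n$-subspace, while the same criterion makes every nonzero form in $\mathcal{V}_1,\mathcal{V}_2$ degenerate, hence of rank $n-2$ by Lemma \ref{degeneracy condition}, giving $(n-2)$-subspaces.

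The main obstacle is precisely the passage from the multiplicative correspondence to the additive eigenspaces. The clean isomorphism with $\mathbb{F}_{p^n}$ lives on the torsion group $\mu$, whereas $E_i$ and $V_j$ are additive $K$-subspaces of $L$ containing many elements outside $\mu$; one must verify that the degeneracy condition $N_{L/L_2}(b)\in K$ — a priori a statement in the local field that the principal units $1+\mathfrak{m}_L$ could disturb — in fact descends faithfully to the finite-field count. The key is that degeneracy forces the explicit conjugate product to be a $2^i$-th root of unity, and such a root of unity is prime to $p$ and hence lies in $\mu$, where reduction is injective; this confines the entire computation to the torsion part and removes any contribution from the principal units. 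Executing this reduction carefully, and checking that the $2$-adic valuation bookkeeping that makes Theorem \ref{th for finite field}(1) work transfers without change under $\alpha\le a+1$, is the technical heart of the argument.
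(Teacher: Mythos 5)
Your overall route is the intended one: the paper proves nothing here itself (it cites \cite{GM2023}), and the hypotheses --- residue field of size $p$, $p+1=2^a l$, $2\le\alpha\le a+1$ --- are exactly those of Theorem \ref{th for finite field}(1), so the proof must take $L$ to be the unramified extension of degree $n$, identify $\sigma$ with the Frobenius substitution, and transfer the finite-field analysis through the residue field. Your structural observations are also correct: the Teichm\"uller embedding $\mathbb{F}_{p^n}^\times\cong\mu\subset L^\times$ is $\sigma$-equivariant, and since the hypotheses force $p$ odd, the principal units $1+\mathfrak{m}_L$ are $2$-divisible with no $2$-torsion, so every $2$-power root of unity of $L$ lies in $\mu$.

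The gap is in the step you yourself defer as ``the technical heart,'' and it is not merely technical. First, a precision: for $b\in E_i$, degeneracy does not force the conjugate product $M=\prod\sigma^{2j}(b)$ (over orbit representatives) to be a root of unity --- $M$ has arbitrary valuation and principal-unit part --- it forces the \emph{quotient} $\sigma(M)/M$ to be one. Knowing this quotient lies in $\mu$ does not ``confine the entire computation to the torsion part,'' because $b$ and $M$ are not torsion, whereas the proof of Theorem \ref{th for finite field} quantifies over all of $\mathbb{F}_{p^n}^\times$, not over roots of unity; so as stated the transfer does not close. The missing idea is a normalization: since $L/K$ is unramified, $L$ and $K$ have the same value group, so after multiplying $b$ by a power of $p$ (an element of $K^\times$, which preserves membership in the $K$-subspace $E_i$ and rescales $\phi_{b,\sigma}$ without changing its rank) one may assume $b\in\mathcal{O}_L^\times$. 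Then $\sigma(x)\equiv x^p \pmod{\mathfrak{m}_L}$ lets you reduce the \emph{exact} equalities $\sigma^{n/2^i}(b)=-b$ and $N_{L/L_2}(\sigma(b)/b)=1$ modulo $\mathfrak{m}_L$, producing a nonzero $\bar b$ in the residue-field eigenspace satisfying the finite-field degeneracy criterion of Proposition \ref{GMdegen-crit}, which contradicts Theorem \ref{th for finite field}(1) with $q=p$; this is what makes each $\mathcal{E}_i$ an $n$-subspace. Note finally an asymmetry you gloss over: reduction can only transfer equalities, hence only \emph{non}-degeneracy statements; the claim that every nonzero form in $\mathcal{V}_1,\mathcal{V}_2$ is degenerate cannot be imported from $\mathbb{F}_{p^n}$ and needs the direct norm computation (for $b\in V_1=L_k$ one has $N_{L/L_2}(b)=N_{L_k/K}(b)^{2^{\alpha-1}}\in K$, and $V_2=jV_1$ for $j$ with $\sigma(j)=-j$, where $N_{L/L_2}(\sigma(j)/j)=(-1)^{n/2}=1$ since $4\mid n$), which is valid over any field and closes the argument.
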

  \section{A short survey on $n$-subspace of $M_{n}(K)$ and $ S_{n}(K)$}
   For every $ n \in \mathbb{N}$ and every field $K$, let $ M(n,K)$ be the vector space of the $ (n \times n )$ matrices over $ K$, let $ S(n,K)$ be the vector space of the symmetric $ (n \times n)$ matrices over $ K$. We recall that a $K$-subpace $S$ of $M(n,K)$ or $ S(n,K)$ is an $ n $-subspace if all of its non-zero elements are invertible. Define $$ \tau_n(K) = \text{max} \{ \dim S : S ~\text{is an $n$ subspace of $M(n,K)$}\},$$   
  $$ \mu_n(K) = \text{max} \{ \dim S : S ~\text{is an $n$ subspace of $S(n,K)$}\}.$$ It is easy to observed that $ \mu_n(K) \leq \tau_n(K)$.  For an arbitrary field $ K $ exact determination of $ \tau_n(K)$ or $ \mu_{n}(K)$ is a difficult problem. However  $ \tau_{n}(K) \leq n $, as   any subspace $S\subset M_{n}(K)$ of dimension greater than $n$ must intersect nontrivially with the subspace of matrices with the first column equal to zero. 
  The invariant $\mu_{n}(K)$ is intimately relate with the invariant $ \tau_n(K)$. In general if $ n$ is even then we can easily checked that $\mu_n(K) \geq \tau_{n/2}(K)$.  Since if $U$ be an $n/2$-subspace of $M(n/2,K)$ then the subspace of all $n \times n$ symmetric matrices of the form 
  \begin{equation*}
  \begin{pmatrix}
      0 & A\\
      A^{T} &0,
  \end{pmatrix}
   \end{equation*}
   where $A$ runs over all $(n/2 \times n/2)$ matrices with entries in $U$ and $A^{T}$ denotes the transpose of $A$ is an $n$-subspace of $S(n,K)$.

  In the following, we will discuss the values of the invariants $\tau_n(K)$ and $\mu_n(K)$ across various fields.
  
      \subsection{For algebraic closed field} Suppose $K$ is an algebraic closed field. It can be easily checked that $\tau_{n}(K)=\mu_{n}(K)=1$ as for any $A$, $ B$ $ \in A_{n}(K) $, $ A - \lambda B $ is singular where $ \lambda $ is an eigen-value of $ AB^{-1} $.
      \subsection{ For real number field} Suppose $K$ is real number field. Then it follows from \emph{\cite[Theorem 1]{JPR1965}} $\tau_{n}(K)= \rho(n)$, where $ \rho(n)$ denotes the Radon-Hurtwitz number and is defined by $ \rho(n) = 2^c + 8 d$, whenever $ n = (2a+1) 2^{c +4d}$ where $ a,b,c,d$ are integers with $0\leq c \leq 3$. Now if $n$ is odd then $\rho(n)=1$, consequently $ \tau_n(K)= \mu_n(K)=1$. On the otherhand if $n$ is even then $\tau_{n/2}(K) \leq \mu_{n}(K) \leq \tau_{n}(K)$. Thus $$ \mu_{n}(K) =  8d, \quad  \text{if}~~~~~~c=0 $$
       and
      $$ 
     \mu_{n}(K)\in
     \begin{cases}
         [1+8d,2+8d] \quad  &\text{if}~~~ c=1 \\
         [2 +8d,4+8d] \quad  &\text{if}~~~ c=2 \\
         [4 + 8d,8+ 8d] \quad  &\text{if}~~~ c=3.
     \end{cases}
     $$
     \subsection{For algebraic number field}  Suppose $K$ is an algebraic number field. Then by \emph{\cite[Lemma 4]{GQ06}} for each $n$ there exists  a cyclic Galois extension $L$ of $K$. Let $ p(x)$ be a irreducible  polynomial of degree $n$ over $K$ and let $A\in M(n,K)$ be a matrix whose characteristic polynomial is $p(x)$.  Let $U\subset M(n,K)$ be the $K$-subspace spanned by the powers of $A$.  Then $U$ is an $n$-subspace of  $M(n,K)$ of dimension $n$ as $U$ is a field isomorphic to $K[x]/\bigl(p(x)\bigr)$. Thus $\tau_n(K) = n$. Although  $A$ may not necessarily belong to $S(n, K)$ but still  we can still obtain an $n$-dimensional $n$-subspace inside $ S(n,K)$. Then by Theorem 
  \ref{odd decomposition} and Remark 
 \ref{cyclic even decomposition}, $A^{0}$  is an $n$-subspace inside $ \sym_K(L)$.  Since $\sym_K(L)$ is isomorphic to $S(n,K)$, hence $A^{0}$ corresponds to an $n$-subspace of $A(n,K)$. Consequently $\mu_n(K) = n$. 
 \subsection{ For finite field} Suppose $K$ be a finite field. Then  for each $n \in \mathbb{N}$ there exists a cyclic Galois extension $ L$ of $ K$ of degree $n$. Then, employing a similar argument as in the case of an algebraic number field, we deduce $\tau_{n}(K) = \mu_{n}(K) = n $.
 \section{Conclusion}
 The eigenspaces associated with elements of the Galois group yield constant rank subspaces in $\sym_K(L)$. If $\ord(\sigma^i) \equiv 0 \pmod{2}$, then in light of the Remark \ref {n-subs-exists-in}, an $n$-subspace of dimension $n/2$ can always be found inside $A^i$. As mentioned earlier in this paper, the degeneracy criterion for the symmetric form $\phi_{b,\sigma}$ aligns with that of the skew form $f_{b,\sigma}$ (defined in \cite{GQ09}), consequently, the quest for determining the maximum dimension of an $n$-subspace inside $A^1 \subseteq \sym_K(L)$ is analogous to solving the problem of the maximum dimension of an $n$-subspace within $A^1 \subseteq \alt_K(L)$. However, spaces derived from eigenvalues may not necessarily represent the maximum possible dimension of an $n$-subspace in $A^1$ (as exemplified in \emph{\cite[Section 5]{GM2023}}), unless $n=2k$ with $k$ odd (Theorem \ref {odd dcmposn for any field}) or $K$ is finite (or more generally, $C^1$~{\emph{\cite[Lemma 3]{GQ06}}} ). Furthermore, it is noted that if $K$ admits a cyclic Galois extension of degree $n$, then $A^i$ forms an $n$-subspace of dimension $n$ (when $\sigma^i$ has odd order). The question remains open whether there exists an $n$-subspace in $\sym_K(L)$ with exactly dimension $n$ or greater than $\tau_{\frac{n}{2}}(K)$ (when $n$ is even) for a field $K$ lacking a Galois extension. Furthermore, as indicated by {\emph{\cite[Theorem 6]{GQ09}}} and {\emph{\cite[Theorem 7]{GQ09}}} for the cyclic extension $L/K$, we can infer that the rank of any non-zero symmetric form in the direct sum $A^1 \oplus A^2 \oplus \ldots \oplus A^k$ is at least $n - 2k$.
 \section*{Acknowledgements}
 The  author gratefully acknowledges support from an NBHM research award. 

\end{document}